\documentclass[a4paper]{amsart}

\DeclareMathAlphabet{\mathpzc}{OT1}{pzc}{m}{it}

\usepackage{amssymb}
\usepackage[hyphens]{url} \urlstyle{same}
\usepackage{color}
\usepackage[english]{babel}
\binoppenalty=9999
\relpenalty=9999
\newcommand{\m}{\mathcal{M}}
\newcommand{\mo}{\mathcal{M}_0}
\newcommand{\mt}{\mathcal{M}_t}
\newcommand{\tb}{\bar{t}}
\newcommand{\xb}{\bar{x}}

\newcommand{\wb}{\overline{W}}

\newcommand{\pb}{\bar{p}}
\newcommand{\qb}{\bar{q}}
\newcommand{\qti}{\tilde{Q}}
\newcommand{\kti}{\tilde{K}}
\newcommand{\hti}{\tilde{h}}
\newcommand{\Hti}{\tilde{H}}
\newcommand{\bti}{\tilde{b}}
\newcommand{\Bti}{\tilde{B}}
\newcommand{\Ati}{|\tilde{A}|}
\newcommand{\tr}{\text{Tr}}

\newcommand{\HK}{\mathbb{H}_{a}^{n+1}}

\newcommand{\hh}{\mathpzc{h}}
\newcommand{\co}{\text{\normalfont co}_a}
\newcommand{\ta}{\text{\normalfont ta}_a}
\newcommand{\ck}{\text{\normalfont c}_a}
\newcommand{\sk}{\text{\normalfont s}_a}

\newtheorem{theorem}{Theorem}[section]
\newtheorem{lem}[theorem]{Lemma}
\newtheorem{prop}[theorem]{Proposition}
\newtheorem{cor}[theorem]{Corollary}

\theoremstyle{definition}

\theoremstyle{remark}

\numberwithin{equation}{section}

\title[Volume preserving non homogeneous flow in hyperbolic space]{Volume preserving non homogeneous mean curvature flow in hyperbolic space}

\author{\sc Maria Chiara Bertini \and Giuseppe Pipoli} 

\date{}

\begin{document}

\maketitle

\begin{abstract}

We study a volume/area preserving curvature flow of hypersurfaces that are convex by horospheres  in the hyperbolic space, with velocity given by a generic positive, increasing  function of the mean curvature, not necessarly homogeneous. For this class of speeds we prove the exponential convergence to a geodesic sphere. The proof is ispired by \cite{CaMi}  and is based on the preserving of the convexity by horospheres that allows to  bound the inner and outer radii and to give uniform bounds on the curvature by maximum principle arguments. In order to deduce the exponential trend, we study the behaviour of a suitable ratio associated to the hypersurface that converges exponentially in time to the value associated to a geodesic sphere.

\end{abstract}

\vspace{1cm}
\noindent {\bf MSC 2010 subject classification} 53C44, 35B40 \bigskip
\section{Introduction}

Let $\HK$ be the hyperbolic  space of constant sectional curvature $-a^2<0$ and let us take a smooth oriented, compact and without boundary hypersurface $F_0:\m\rightarrow\HK$. We consider a family of maps $F:\m\times[0,T)\rightarrow\HK$, evolving according the law: \smallskip
\begin{equation}\label{fl}
\left\{
\begin{array}{l}
\partial_t F(x,t)=[-\phi(H(x,t))+h(t)]\nu(x,t) \medskip\\
F(x,0)=F_0(x),\\
\end{array}
\right.
\end{equation}
where:
\begin{itemize}
\item $H$ and $\nu$ denote respectively the mean curvature and the outer unit normal  vector of the evolving hypersurface $\mt:=F(\m,t)$.
 \item $\phi:[0,+\infty)\rightarrow \mathbb{R}$ is a continuous function, $C^2$ differentiable in $(0,+\infty)$  such that\medskip

  \begin{trivlist}
         \item $i)$ $\phi(\alpha)>0, \hspace{0.5cm} \phi'(\alpha)>0\hspace{5mm}\forall \alpha>0$; \medskip
         \item $ii)$ $\displaystyle \lim_{\alpha\to \infty}\phi(\alpha)=\infty$;
         \item $iii)$  $ \displaystyle\lim_{\alpha \to \infty}\frac{\phi'(\alpha)\alpha^2}{\phi(\alpha)}=\infty$; \medskip
         \item $iv)$ $\phi''(\alpha)\alpha\geq-2\phi'(\alpha)\hspace{5mm}\forall \alpha>0$. \medskip
         \end{trivlist}

 \item The function $h(t)$ is either defined as
 \begin{equation}\label{vpr}h(t):=\frac{1}{A_t}\int_{\mt} \phi(H)d\mu
 \end{equation}
 or as
 \begin{equation}\label{apr}h(t):=\frac{\int_{\mt} H\phi(H)d\mu}{\int_{\mt} Hd\mu}.
 \end{equation}
\end{itemize}
where $A_t=\int_{\mt}d\mu_t$ denotes the area of $\mt$.

  The choice of $h$  is made in order to keep the volume enclosed by $\mt$ constant in case \eqref{vpr}, and in order to keep the area of $\mt$ constant in case \eqref{apr}. Flows of this form are sometimes called {\em constrained} curvature flows, while the corresponding ones without the $h(t)$ term will be referred to as {\em standard} flows.

In this paper we restrict our attention to the class of $\hh$-convex hypersurfaces, that turns to be a good choice when the ambient manifold is the hyperbolic space.  Roughly speaking, we will see that $\hh$-convexity is strong enough to offset  the negative curvature of the ambient manifold and to  be preserved along the flow.
The main result proved in this paper is the following.

\begin{theorem}\label{mt}Let  $F_0:\m\rightarrow \HK$, with $n \geq 1$, be a smooth embedding of an oriented, compact   $n$-dimensional  manifold without boundary, such that $F_0(\m)$ is $\hh$-convex. Then the flow \eqref{fl} with $h(t)$ given by \eqref{vpr} (resp. \eqref{apr}) has a unique smooth solution, which exists for any time $t\in[0,\infty)$. The solution is $\hh$-convex for any time and converges smoothly and exponentially, as $t\to\infty$, to a geodesic sphere that encloses the same volume (resp. has the same area) as the initial datum $\mo$.
\end{theorem}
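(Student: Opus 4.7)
The plan is to adapt the strategy of \cite{CaMi} for the volume-preserving mean curvature flow of $\hh$-convex hypersurfaces to the present non-homogeneous setting, exploiting each of the structural assumptions $(i)$--$(iv)$ on $\phi$. Short-time existence of a unique smooth solution on a maximal interval $[0,T_{\max})$ is standard: assumption $(i)$ makes \eqref{fl} quasi-linear strictly parabolic, and the non-local term $h(t)$ depends in a Lipschitz way on the evolving hypersurface, so a fixed-point/linearisation argument applies.

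The first substantive step is to show that $\hh$-convexity---all principal curvatures bounded below by $a$---is preserved along the flow. I would derive the evolution equation for the Weingarten operator $W$ and apply a tensor maximum principle to $W-a\,\mathrm{Id}$. At the critical eigenvalue $\kappa_i=a$ the ambient sectional curvature $-a^2$ contributes a good sign, assumption $(iv)$ is tailored precisely to control the second-order term $\phi''(H)|\nabla H|^2$ entering through the principal symbol of the linearised operator, and the non-local term $h(t)\nu$ produces a uniform perturbation on each eigenvalue that does not spoil this sign.

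Once $\hh$-convexity is preserved, one automatically has $H\geq na$, together with an upper bound on the outer radius from comparison with an enclosing ball; combined with the conserved enclosed volume (resp.\ area) in case \eqref{vpr} (resp.\ \eqref{apr}), this also yields a uniform lower bound on the inner radius. These bounds control $h(t)$ uniformly, and a maximum-principle argument applied to $\phi(H)$, using $(ii)$ and $(iii)$ to rule out blow-up, yields a uniform upper bound on $H$. Together with $\hh$-convexity this gives two-sided bounds on all principal curvatures, hence uniform parabolicity; Krylov--Safonov and Schauder estimates then supply all higher-order bounds and extend the solution to $t\in[0,\infty)$.

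The last step, which I expect to be the main obstacle, is smooth exponential convergence to a geodesic sphere. The strategy would be to introduce a scalar quantity measuring the deviation from sphericity---for instance the oscillation $\max_{\mt}\phi(H)-\min_{\mt}\phi(H)$, or the ratio between the circumscribed and inscribed radii---and prove via its evolution equation and the uniform bounds above that it decays exponentially in time. The essential analytic ingredient is a spectral gap for the linearisation at the limiting sphere, for which assumption $(iii)$ appears to be crucial: it provides enough effective diffusion $\phi'(H)$ to dominate the zeroth-order reaction built from $\phi(H)$. Exponential decay of such a scalar quantity, together with the uniform higher-order estimates and interpolation, would then upgrade the convergence to smooth exponential convergence to the unique geodesic sphere enclosing the same volume (resp.\ having the same area) as $\mo$.
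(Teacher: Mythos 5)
Your first two thirds follow the paper's architecture closely: short-time existence from $(i)$, preservation of $\hh$-convexity via a tensor maximum principle (the paper applies it to $S_{ij}=b_{ij}-\tfrac{1}{a}g_{ij}$ with $b$ the inverse of the Weingarten operator rather than to $W-a\,\mathrm{Id}$ directly, but the underlying idea and the role of $(iv)$ in controlling $\phi''|\nabla H|^2$ are the same), the inradius/circumradius bounds from the conserved volume or area, and long-time existence from uniform parabolicity. Two remarks. First, you say a ``maximum-principle argument applied to $\phi(H)$'' gives the upper curvature bound: this is not a straight application of the maximum principle. The reaction term in the evolution of $\phi$ has the wrong sign, so the paper works instead with the ratio $W=\phi(H)/(\sigma-c)$, where $\sigma=\sk(r_{\qb})\langle\nu,\partial_{r_{\qb}}\rangle$ is a support-type function adapted from \cite{CaMi}, together with a separate lemma guaranteeing a ball of fixed radius stays inside $\Omega_t$ for a short time. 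Condition $(iii)$ is used precisely there, to guarantee $\ck(D)-\tfrac{2c\phi'H^2}{3n\phi}\leq-1$ when $H$ is large; it is not used for a spectral gap at the sphere.

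On the last step you genuinely diverge, and this is where there is a gap. The paper splits the convergence into two independent stages. It first shows $\phi(H)-h\to0$ uniformly by monotonicity of $A_t$ (resp. $V_t$) plus Lipschitz continuity of $H$, and invokes Alexandrov's theorem to identify the limit as a geodesic sphere. Only then does it prove the exponential rate, not by a linearisation/spectral-gap argument but by a maximum-principle estimate on the perturbed pinching ratio $\qti=\kti/\Hti^n$ with $\hti^i_j=h^i_j-a\delta^i_j$ (following \cite{Sch2} and \cite{GLW}): the function $f=\tfrac{1}{n^n}-\qti$ satisfies $\partial_t f\leq\phi'\Delta f+\text{(gradient terms)}-\delta f$ for $t$ large, where $\delta>0$ uses $\phi'H-\phi+h\geq\delta_2$ once $\phi(H)\approx h$, and strict $\hh$-convexity near the limit sphere. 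Your proposed quantity---the oscillation of $\phi(H)$ or the radius ratio---does not satisfy a clean parabolic inequality and would need a Harnack-type input you have not supplied, while a linearisation at the sphere would require proving a spectral gap for a non-homogeneous, non-local operator, which is not done and is not obviously easier. So the exponential convergence step as written is a plan, not a proof, and it is the one place where your route truly departs from (and falls short of) the paper's.
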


A similar  flow was recently studied by the first author and Sinestrari in \cite{BeSi} for strictly convex hypersurfaces of the Euclidean space. Since convexity is weaker than  $\hh$-convexity, the authors in \cite{BeSi} needed  a certain behaviour of the velocity and its derivative at zero.  We do not require these hypotheses on $\phi$ and $\phi'$. Then we recover from \cite{BeSi} a very large class of velocities, as linear combinations of powers with degree greater than zero, logarithms and exponentials. On the other hand, we get some extra examples given by functions with a behaviour at zero not admitted in \cite{BeSi}.

 Constrained curvature flows have been intensively studied in recent years. For an overview on curvature flows in the Euclidean ambient manifold, see for example \cite{CaSi}. Our main source of inspiration is the paper of Cabezas-Rivas and Miquel \cite{CaMi}, that we generalize. Theorem 1.2 in  \cite{CaMi} in fact is a particular case of our Theorem \ref{mt} when $\phi(H)=H$ and $h$ is taken as in \eqref{vpr}.   
After \cite{CaMi}, the evolution of $\hh$-convex hypersurfaces was explored in many contests. For example, in \cite{Ma,WaXi} some mixed volume preserving  flows were considered, where the velocity is assumed to be a degree one homogeneous function of the principal curvatures. In those cases too the 
authors have the convergence to a geodesic sphere. Also there are some results on curvature flows in the hyperbolic space with velocity of degree greater then one,  but the conditions required on the initial datum are  stronger than $\hh$-convexity, as in 
 \cite{GLW}. In our paper instead we obtain in particular the convergence to a geodesic sphere for the flow with velocity $\phi=H^k, k>1, $ requiring just a condition that is the natural equivalent of convexity in Eucledean setting. Also, we include velocities given by non homogeneous functions that satisfy some very general properties.  Non-homogeneous flows have been sometimes studied in the Euclidean ambient space, in the standard or constraint case. We recall in particular the works of \cite{AlSi,BeSi,CT1,Sm}. We point out that, for the best of our knowledge,  a non homogeneous flow in a not Eucledean ambient space is considered here for the first time.

The paper is organized as follows. In Section 2 we collect some preliminaries and fix some notations. In Section 3 we use the maximum principle for tensors to prove that $\hh$-convexity is preserved along the flow. Then, by some well known  results valid for $\hh$-convex domains, we show that the inradius is uniformly bounded from both sides.  We use this property in Section  $4$  to bound the function $\phi$ and hence $H$. By the $\hh$-convexity then also the curvatures are bounded. In this way we can prove the long time existence of the solution and, by the uniform parabolicity of the flow, we also get the existence of a limit hypersurface. In Section $5$  we complete the proof of Theorem \ref{mt}. In the first part we get the smooth convergence by showing that the mean curvature tends uniformly to a constant value, thus the limit hypersurface has to be a geodesic sphere by a classical result by Alexandrov \cite{Al}. In the second part we prove that the convergence has an exponential rate using a method taken from \cite{Sch2} and \cite{GLW}. 

\section{Preliminaries}\label{Preliminaries}

\subsection*{Notations}
For every constant $a>0$, we denote with $\HK$ the hyperbolic space of dimension $n+1$ and constant sectional curvature $-a^2$, and let $\langle\cdot,\cdot\rangle$ be its standard Riemannian metric. We denote by $d_\mathbb{H}$ the hyperbolic distance between points induced by $\langle\cdot,\cdot\rangle$. Moreover, we put a bar over any geometrical quantity whenever it is referred to the ambient space $\HK$. Let $F:\m\rightarrow \HK$ be an embedded hypersurface with local coordinates $(x^1,\cdots, x^n)$. We endow $\m$ with the induced metric $g=(g_{ij})$ given by 
$$g_{ij}=\left\langle\frac{\partial F}{\partial x^i},\frac{\partial F}{\partial x^j}\right\rangle$$
We also denote respectively by $\nabla$ and $A=(h_{ij})$ the Levi-Civita connection and the second fundamental form of $\m$, while the measure is $d\mu=\sqrt{\det g_{ij}}\, dx$. The principal curvatures of $\m$ are the eigenvalue of $A$ with respect to $g$ and they are denoted by $\lambda_1,\dots,\lambda_n$. The mean curvature is $H=\lambda_1+\dots+\lambda_n$. 
. We denote by  $\Delta=g^{ij}\nabla_i\nabla_j$ the Laplace-Beltrami operator, where $g^{-1}=(g^{ij})$ is the inverse of the metric. As usual, we always sum on repeated indices, and we lower or lift tensor indices via $g$, e.g. the Weingarten operator is given by $h^i_j=h_{kj}g^{ik}$. Moreover the metric $g$ induces, in the usual way, a norm on tensors. For example the norm of the second fundamental form is $|A|^2=h_i^jh_j^i=\sum_i\lambda_i^2$.

\subsection*{Some results in hyperbolic geometry}
In this paper we restrict our attention to the class of $\hh$-convex hypersurfaces. We say that a hypersurface is \textit{convex by horospheres} (\textit{$\hh$-convex} for short) if it bounds a domain $\Omega$ such that at every point $p\in\m=\partial\Omega$ there exists a horophere of $\HK$ passing through $p$ such that $\Omega$ is contained in the region bounded by the horosphere. In \cite{BoMi} was proved that $\m$ is $\hh$-convex if and only if at any point $\lambda_i\geq a$ for all $i$. Note that this condition is stronger than convexity. 

We will use the following notations for the hyperbolic functions: for any $a>0$  

$$
\begin{array}{ll}
\sk(t)=\frac{\sinh(a t)}{a}, & \ck(t)=\cosh(a t),\\\\
\ta(t)=\frac{\sk(t)}{\ck(t)} & \co(t)=\frac{\ck(t)}{\text{s}_a(t)}
\end{array}
$$
Given a point $q\in\HK$, we set
$$
\begin{array}{l}
r_q(p)=d_{\mathbb{H}}(p,q) \hspace{0.5cm}\forall p\in\HK,\\
\\
\partial_{r_p}=\bar{\nabla}r_p.
\end{array}
$$


We recall that the inradius of a bounded domain $\Omega\subset\HK$ is  the biggest radius of a geodesic ball contained in $\Omega$. Such a geodesic ball is called inball. The following theorem is due to \cite{BoGaRe,BoMi,BoMi2,BoVl}.

\begin{theorem}\label{iperb2}
Let $\Omega$ be a compact $\hh$-convex domain of $\HK$, and let $q\in\Omega$ the center of a inball of $\Omega$. If $\rho$ is the inradius of $\Omega$, then
\begin{enumerate}
\item the maximal distance $\max d_{\HK}(q,\partial\Omega)$ between $q$ and the point in $\Omega$ satisfies the inequality
$$\max d_{\HK}(q,\partial\Omega)\leq\rho+a\ln\frac{(1+\sqrt{\ta\frac{\rho}{2}})^2}{1+\ta\frac{\rho}{2}}<
\rho+a\ln2$$
\item For any interior point $p$ of $\Omega$ and any boundary point $q\in\partial\Omega$, 
$$\langle\nu(q),\partial_{r_p}\rangle\geq a\ta(d_{\mathbb{H}}(p,\partial\Omega)),$$
where $\nu(q)$ is the outer normal vector to $\partial\Omega$ at $q.$ 
\end{enumerate}
\end{theorem}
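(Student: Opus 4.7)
Both estimates express the horospherical defining property of $\hh$-convexity: at each $q\in\partial\Omega$ there is a horosphere $\Sigma_q$ through $q$ with $\Omega\subset\mathcal{B}_q$ (the closed horoball bounded by $\Sigma_q$), and $\nu(q)$ equal to the outward unit normal of $\Sigma_q$ at $q$. My strategy is to reduce each inequality to an explicit computation inside a single horoball, which plays in the $\hh$-convex class the role that a half-space plays among Euclidean convex bodies.

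For \emph{part (2)}, fix $q\in\partial\Omega$ and an interior point $p$, set $d=d_{\HK}(p,\partial\Omega)$, and note that $B_d(p)\subset\Omega\subset\mathcal{B}_q$ forces $d(p,\Sigma_q)\geq d$. I would pass to the upper half space model of $\HK$ normalised so that $\Sigma_q=\{y=1\}$, $\mathcal{B}_q=\{y\geq 1\}$, $q=(0,\dots,0,1)$; writing $p=(x_p,y_p)$ one then has $y_p\geq e^{ad}$. A short Euclidean computation of the tangent at $q$ to the geodesic from $p$ to $q$, combined with the identification $\nu(q)=-e_{n+1}$, yields $\langle\nu(q),\partial_{r_p}\rangle=c_0/\sqrt{c_0^2+1}$ with $c_0=(|x_p|^2+y_p^2-1)/(2|x_p|)$. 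AM--GM gives $c_0\geq\sqrt{y_p^2-1}\geq\sqrt{e^{2ad}-1}$, and since $c\mapsto c/\sqrt{c^2+1}$ is increasing, the inner product is at least $\sqrt{1-e^{-2ad}}$, which an elementary check shows exceeds $\tanh(ad)=a\ta(d)$.

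For \emph{part (1)}, let $p\in\partial\Omega$ realise $R=\max d_{\HK}(q,\partial\Omega)$. Since $p$ is a critical point of $r_q|_{\partial\Omega}$, the geodesic from $q$ to $p$ meets $\partial\Omega$ orthogonally and hence is also perpendicular to the supporting horosphere $\Sigma_p$. Using the upper half space model with $\Sigma_p=\{y=1\}$ and $p=(0,\dots,0,1)$, this forces $q=(0,\dots,0,e^{aR})$, so $R$ is the hyperbolic height of $q$ above $\Sigma_p$. The inclusion $B_\rho(q)\subset\mathcal{B}_p$ alone only gives $R\geq\rho$; the missing input is the tangency of $B_\rho(q)$ with $\partial\Omega$ at some other boundary point $q_0$, whose supporting horosphere $\Sigma_{q_0}$ must contain both $B_\rho(q)$ and $p$ inside its horoball. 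Imposing this second constraint in the model produces an inequality between $e^{aR}$ and $\rho$ which, after a half-angle manipulation of $\ta$, takes the stated form; the cleaner bound $R<\rho+a\ln 2$ then follows from the elementary inequality $(1+\sqrt{x})^2\leq 2(1+x)$.

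\emph{Main obstacle.} Part (2) is a one-horoball calculation and is straightforward once the model has been chosen. Part (1) is more delicate, since the single supporting horosphere at $p$ only encodes the geometry near $p$; the heart of the argument is exploiting a \emph{second} supporting horosphere, at the inball tangency point $q_0$, and combining the two to extract the exact algebraic form of the bound. The full derivation can be found in the references cited with the statement.
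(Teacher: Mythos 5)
The paper does not prove Theorem 2.2; it imports it from the cited references \cite{BoGaRe,BoMi,BoMi2,BoVl}, so there is no internal proof against which to compare your outline. I therefore assess it on its own terms.

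Your argument for part (2) is correct and complete modulo routine computation. After placing $\Sigma_q=\{y=1\}$, $q=(0,\dots,0,1)$, the containment $B_d(p)\subset\Omega\subset\mathcal{B}_q$ gives $y_p\geq e^{ad}$; the conformality of the half-space model makes the inner product of the two hyperbolic unit vectors equal to the Euclidean one, yielding $\langle\nu(q),\partial_{r_p}\rangle=c_0/\sqrt{c_0^2+1}$; the AM--GM step $c_0=(|x_p|^2+y_p^2-1)/(2|x_p|)\geq\sqrt{y_p^2-1}$ and the monotonicity of $c\mapsto c/\sqrt{c^2+1}$ give $\langle\nu(q),\partial_{r_p}\rangle\geq\sqrt{1-e^{-2ad}}$, and $\sqrt{1-e^{-2ad}}\geq\tanh(ad)=a\,\ta(d)$ reduces with $u=e^{ad}$ to the obvious $3u^2+1\geq 0$. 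All of this checks out.

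For part (1) there is a genuine gap. As you say yourself, a single supporting horoball at the farthest point $p$ gives only $R\geq\rho$, and you correctly propose to impose a second supporting horoball $\mathcal{B}_{q_0}$ at an inball-tangency point $q_0\in\partial B_\rho(q)\cap\partial\Omega$. But the statement ``$\mathcal{B}_{q_0}$ must contain both $B_\rho(q)$ and $p$'' is \emph{vacuous} unless you control where on $\partial B_\rho(q)$ the point $q_0$ lies: if $q_0$ sits on the side of $\partial B_\rho(q)$ facing away from $p$ (in your normalisation $\Sigma_p=\{y=1\}$, $p=(0,\dots,0,1)$, $q=(0,\dots,0,e^{aR})$, that is $y_{q_0}>e^{aR}/\cosh(a\rho)$), then $p\in\mathcal{B}_{q_0}$ holds automatically and imposes no restriction on $R$. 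The missing ingredient is the classical fact that the contact set $\partial B_\rho(q)\cap\partial\Omega$ of a maximal inscribed ball is not contained in any open hemisphere of $\partial B_\rho(q)$ (otherwise one could translate $q$ away from that hemisphere and enlarge the inball). This guarantees a tangency point $q_0$ on the \emph{closed hemisphere of $\partial B_\rho(q)$ facing $p$}, i.e. with $y_{q_0}\leq e^{aR}/\cosh(a\rho)$. Only for such $q_0$ is $p\in\mathcal{B}_{q_0}$ a real constraint; combined with the explicit formula for the tangent horoball one obtains $e^{a(R-\rho)}\leq 1+\sqrt{1-e^{-2a\rho}}$, and the substitution $t=\tanh(a\rho/2)$ turns $1+\sqrt{1-e^{-2a\rho}}$ into $(1+\sqrt{t})^2/(1+t)$, while $(1+\sqrt{t})^2\leq 2(1+t)$ gives the clean bound. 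Without the hemisphere observation the step you label ``imposing this second constraint'' does not close, so this is a real missing idea rather than a deferred computation.
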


\subsection*{Short time existence and evolution equations}
It is well known that a flow of the form \eqref{fl} 
 is parabolic if at any point
\begin{equation}\label{par}
\frac{\partial\phi}{\partial\lambda_i}>0,\hspace{1cm}i=1,\dots,n
\end{equation}
i.e. $\phi'>0$ at any point. This is guaranteed, by condition $i)$ on $\phi$,  because we assume that the initial datum is $\hh$-convex and then in particular $H>0$ at least 
 in  a small time interval. 
Parabolicity ensures the local existence and uniqueness of the solution.
Hence we have the following result, see
 \cite{Hu2, HuPo,Mc2} for more details.
\begin{theorem} Let $F_0:\m\rightarrow \HK$ be a smooth embedding of an oriented, compact  $n$-dimensional  manifold without boundary, such that $F_0(\m)$ is strictly mean convex. Then the flow \eqref{fl} has a unique smooth solution $\mt$ defined on a maximal time interval $[0,T)$.
\end{theorem}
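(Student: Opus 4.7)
The plan is to reduce the geometric flow \eqref{fl} to a quasilinear parabolic scalar PDE with a nonlocal forcing term, and then apply the classical theory cited in \cite{Hu2,HuPo,Mc2} together with a fixed-point argument to handle the constraint $h(t)$.

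\textbf{Parametrization as a graph.} Since $F_0(\m)$ is compact and strictly mean convex, there exists $\varepsilon>0$ with $H\geq\varepsilon$ on $\mo$. In a tubular neighborhood of $F_0(\m)$, any nearby embedded hypersurface can be written as a normal graph $F(x,t)=\exp_{F_0(x)}\!\big(u(x,t)\,\nu_0(x)\big)$ for a scalar function $u:\m\times[0,\tau)\to\mathbb{R}$ with $u(\cdot,0)\equiv 0$. The mean curvature $H$ and the outward unit normal $\nu$ can be expressed in terms of $u$, $\nabla u$, and $\nabla^2 u$; the flow \eqref{fl} then becomes
\begin{equation*}
\partial_t u \;=\; \frac{1}{\langle\nu,\nu_0\rangle}\bigl[-\phi(H[u,\nabla u,\nabla^2 u]) + h(t)\bigr],
\end{equation*}
which is a quasilinear scalar equation for $u$ whose linearization has principal symbol $\phi'(H)\,g^{ij}(u,\nabla u)$. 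By condition $i)$ on $\phi$ and the strict positivity of $H$ near $t=0$, this symbol is strictly positive definite, so the equation is uniformly parabolic on a short time interval.

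\textbf{Handling the nonlocal term by a fixed point argument.} The term $h(t)$ defined by \eqref{vpr} or \eqref{apr} makes the equation nonlocal in the spatial variable. I would treat it as follows: fix a small $\tau>0$ and, for any $\tilde h\in C^{\alpha/2}([0,\tau])$ close to the initial value $h(0)$, solve the \emph{local} quasilinear parabolic problem with forcing $\tilde h(t)$ in place of $h(t)$. By the classical Schauder theory for quasilinear uniformly parabolic equations on compact manifolds (exactly the framework of \cite{Hu2,HuPo,Mc2}), this local problem admits a unique solution $u_{\tilde h}\in C^{2+\alpha,1+\alpha/2}(\m\times[0,\tau])$, depending continuously on $\tilde h$. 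Define the map $\mathcal{T}:\tilde h\mapsto h[u_{\tilde h}]$ by plugging $u_{\tilde h}$ into the right-hand side of \eqref{vpr} (resp. \eqref{apr}). Because the integrands depend smoothly on $u_{\tilde h}$ and its derivatives, and the area functional depends smoothly on $u_{\tilde h}$, the map $\mathcal{T}$ is Lipschitz in $\tilde h$ with Lipschitz constant $O(\tau^{\alpha/2})$. Hence, shrinking $\tau$ if necessary, $\mathcal{T}$ is a contraction on a closed ball in $C^{\alpha/2}([0,\tau])$ centered at the constant $h(0)$, giving a unique fixed point and hence a unique solution of \eqref{fl} on $[0,\tau]$.

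\textbf{Smoothness and maximal existence.} Once $u\in C^{2+\alpha,1+\alpha/2}$ is obtained, standard parabolic bootstrapping shows $u$ is smooth in space-time, and hence so is $F$. To extend the solution to a maximal interval $[0,T)$, one iterates the short-time argument as long as the hypersurface remains a smooth embedding with strictly positive mean curvature; this yields a unique maximal time $T\in(0,\infty]$ up to which the smooth solution exists.

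\textbf{Main obstacle.} The only nontrivial point is verifying the contraction estimate for $\mathcal{T}$ in the appropriate Hölder norm; this is where the smoothness of $\phi$, the positivity of $H$ near $t=0$, and the $C^{1,\alpha}$ control of $\nu$ on the approximate solutions enter. Everything else is the parabolic Schauder package used in the references cited by the authors.
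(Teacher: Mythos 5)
The paper does not actually prove this theorem: it observes that $\phi'(H)>0$ on a short time interval makes \eqref{fl} uniformly parabolic and then defers entirely to \cite{Hu2,HuPo,Mc2}. Your plan supplies exactly the argument the authors are implicitly appealing to, and the broad structure (normal-graph reduction to a scalar quasilinear PDE, separation of the nonlocal forcing $h(t)$ via a fixed point, Schauder bootstrap, continuation to a maximal interval) is the standard route used in those references. So there is no conflict with the paper; you have filled in a proof where the paper has none.

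One technical step deserves a caveat. You claim $\mathcal{T}:\tilde h\mapsto h[u_{\tilde h}]$ is Lipschitz on $C^{\alpha/2}([0,\tau])$ with constant $O(\tau^{\alpha/2})$. The gain of a factor $\tau^{\alpha/2}$ is genuine in the $C^{0}$ norm: the difference $u_{\tilde h_1}-u_{\tilde h_2}$ vanishes identically at $t=0$, its $C^{2+\alpha,1+\alpha/2}$ norm is controlled by $\|\tilde h_1-\tilde h_2\|_{C^{\alpha/2}}$ uniformly in $\tau$, and so its sup over $[0,\tau]$ in $C^2(\m)$ (which is all that $h[\,\cdot\,]$ sees) is bounded by $\tau^{\alpha/2}$ times that Hölder seminorm. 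However, the \emph{time-Hölder seminorm} $[\,h[u_{\tilde h_1}]-h[u_{\tilde h_2}]\,]_{\alpha/2}$ does not obviously pick up an extra small factor as $\tau\to 0$, so $\mathcal{T}$ need not be a contraction in the $C^{\alpha/2}$ norm itself. The standard repair is either to run the fixed point in $C^{0}([0,\tau])$ on a closed bounded ball of $C^{\alpha/2}$ (which is $C^{0}$-compact by Arzel\`a--Ascoli, so the contraction argument still closes), or, as in McCoy's treatment, to observe that $h(t)$ is a bounded time-dependent scalar and treat the nonlocality by an a priori estimate rather than a literal Banach fixed point. Either way, the idea is sound; just be careful which norm carries the smallness.
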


In the next proposition we list the evolution equations for the main geometrical quantities associated with the flow \eqref{fl}, which can be computed similarly to \cite{Hu1}, see also \cite{AlSi,Sm}. For brevity, the dependence of the functions by space and time is omitted. We remind in particular that $\phi=\phi(H(x,t))$ is space and time dependent, while $h=h(t)$ is only time dependent.

\begin{prop}\label{eq-ev}

We have the following evolution equations for the flow \eqref{fl}:
\begin{align*}
&\partial_t g_{ij} = 2(-\phi +h)h_{ij},\\
&\partial_t g^{ij} = -2(-\phi +h)h^{ij},\\
&\partial_t \nu=\nabla\phi,\\
&\partial_t d\mu = H(-\phi+h)d\mu, \\
&\partial_t h_{ij} = \phi'\Delta h_{ij}-(\phi'H+\phi-h) h_{il}h_j^l+\phi'|A|^2h_{ij}+\phi''\nabla_iH\nabla_jH\\
&\phantom{\partial_t h_{ij} = }-a^2(\phi'H+\phi-h)g_{ij}+na^2\phi'h_{ij},\\
&\partial_t H = \phi'\Delta H +\phi''|\nabla H|^2+(\phi-h)|A|^2-na^2(\phi-h),\\
&\partial_t\phi = \phi'\Delta\phi+\phi'(\phi-h)|A|^2-na^2\phi'(\phi-h),\\
\end{align*}
\end{prop}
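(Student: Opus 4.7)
The plan is to compute each evolution equation in the order listed, building on the previous ones, following the standard Huisken-type calculation recalled in \cite{Hu1,AlSi,Sm} but adapted to the curved ambient $\HK$. Write $W:=-\phi(H)+h(t)$ so that the flow reads $\partial_t F = W\nu$. I would first differentiate $g_{ij}=\langle\partial_i F,\partial_j F\rangle$ in time, commute $\partial_t$ with $\partial_i$, and use the Weingarten relation $\bar\nabla_{\partial_i F}\nu=-h_i^k\partial_k F$ together with $\langle\nu,\partial_j F\rangle=0$ to obtain $\partial_t g_{ij}=2Wh_{ij}$. Differentiating $g^{ik}g_{kj}=\delta^i_j$ immediately gives the formula for $\partial_t g^{ij}$. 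For $\partial_t\nu$, the identity $|\nu|^2=1$ forces $\partial_t\nu$ to be tangential, and $0=\partial_t\langle\nu,\partial_i F\rangle$ yields $\langle\partial_t\nu,\partial_i F\rangle=-\langle\nu,\partial_i(W\nu)\rangle=\nabla_i\phi$ (since $h(t)$ is space-independent), hence $\partial_t\nu=\nabla\phi$. Jacobi's formula $\partial_t\sqrt{\det g}=\tfrac12 g^{ij}\partial_t g_{ij}\sqrt{\det g}$ then gives $\partial_t d\mu=WH\,d\mu$.

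The main task, and main obstacle, is the evolution of $h_{ij}$. Starting from $h_{ij}=-\langle\bar\nabla_{\partial_i F}\partial_j F,\nu\rangle$, differentiating in time and commuting derivatives brings in both $\nabla_i\nabla_j W$ and curvature terms from $\HK$. Using $W=-\phi+h$ and the chain rule $\nabla_i\nabla_j\phi=\phi''\nabla_i H\nabla_j H+\phi'\nabla_i\nabla_j H$, I would then invoke Simons' identity in the space form of sectional curvature $-a^2$,
\begin{equation*}
\Delta h_{ij} = \nabla_i\nabla_j H + Hh_i^l h_{lj} - |A|^2 h_{ij} - na^2 h_{ij} + a^2 H g_{ij},
\end{equation*}
to eliminate $\nabla_i\nabla_j H$ in favour of $\phi'\Delta h_{ij}$. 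The remaining terms organise into $-(\phi'H+\phi-h)h_i^l h_{lj}+\phi'|A|^2 h_{ij}-a^2(\phi'H+\phi-h)g_{ij}+na^2\phi' h_{ij}+\phi''\nabla_iH\nabla_jH$, matching the claim. The bookkeeping of the ambient curvature contributions, specifically the $na^2 h_{ij}$ and $a^2 Hg_{ij}$ terms coming from the hyperbolic Simons identity, is where I expect most care is needed.

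Finally, $\partial_t H=\partial_t(g^{ij}h_{ij})$ is obtained by combining the formulas for $\partial_t g^{ij}$ and $\partial_t h_{ij}$: the $2(-\phi+h)h^{ij}\cdot h_{ij}$ contribution from $\partial_t g^{ij}$ cancels parts of the trace of $\partial_t h_{ij}$, leaving $\phi'\Delta H+\phi''|\nabla H|^2+(\phi-h)|A|^2-na^2(\phi-h)$. The last line is then immediate from $\partial_t\phi=\phi'\partial_t H$. Since the ambient geometry enters only through the constant curvature terms in the Simons identity and through the Weingarten formula, the whole calculation is a direct adaptation of the classical Euclidean computation, and no new analytic input is required beyond the identities cited.
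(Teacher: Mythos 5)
Your proposal is correct and follows exactly the standard Huisken-type computation that the paper itself merely cites (\cite{Hu1,AlSi,Sm}) without reproducing. The key ingredients are all in the right place: the first-variation formulas for $g_{ij}$, $g^{ij}$, $\nu$, $d\mu$; the general evolution of $h_{ij}$ under a normal speed combined with Simons' identity in the space form of curvature $-a^2$, which you state with the correct signs ($+a^2Hg_{ij}-na^2h_{ij}$, as one can confirm by tracing and recovering $\partial_t H=\phi'\Delta H+\phi''|\nabla H|^2+(\phi-h)(|A|^2-na^2)$); and the chain rule $\Delta\phi=\phi'\Delta H+\phi''|\nabla H|^2$ combined with $\partial_t\phi=\phi'\partial_t H$ for the last line. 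Nothing essential is missing; this is the same argument the paper refers the reader to.
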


Let us denote by $\Omega_t$ the region enclosed by $\mt$. $A_t$ is the $n$-dimensional measure of $\mt$, while $V_t$ is the $(n+1)$-dimensional measure of $\Omega_t$.

\begin{lem}\label{l1}
Along the flow \eqref{fl} we have
$$
\begin{array}{lll}
1)& \frac{d}{dt}A_t\leq 0,& \frac{d}{dt}V_t\geq 0.\\
2)& a_1\leq A_t\leq a_2,& v_1\leq V_t\leq v_2,
\end{array}
$$
for some positive constants $a_1$, $a_2$, $v_1$ and $v_2$.
\end{lem}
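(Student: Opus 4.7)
The plan is to read off both derivatives directly from the evolution equations collected in Proposition~\ref{eq-ev}, isolate the term involving the integral $h(t)$, and then use two facts: (a) an integral Chebyshev-type inequality, which is the right hidden structure behind $\phi'>0$, and (b) the isoperimetric inequality in $\HK$. Monotonicity will yield one of the two bounds in each inequality of 2), and isoperimetry will yield the other.

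\textbf{Step 1: Monotonicity.} From $\partial_t d\mu = H(-\phi + h)\,d\mu$ I get
\begin{equation*}
\frac{d}{dt}A_t = \int_{\mt}H(-\phi+h)\,d\mu, \qquad \frac{d}{dt}V_t = \int_{\mt}(-\phi+h)\,d\mu.
\end{equation*}
Substituting the expression \eqref{vpr} for $h$, the second integral vanishes identically; substituting \eqref{apr}, the first one does. So in case \eqref{vpr} the volume is preserved exactly, and in case \eqref{apr} the area is preserved exactly. To get the sign of the remaining derivative, I observe that since $\phi'>0$ by hypothesis $i)$, the functions $H$ and $\phi(H)$ are comonotone on $\mt$. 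Hence the integral Chebyshev inequality
\begin{equation*}
A_t\int_{\mt}H\,\phi(H)\,d\mu \;\geq\; \Bigl(\int_{\mt}H\,d\mu\Bigr)\Bigl(\int_{\mt}\phi(H)\,d\mu\Bigr)
\end{equation*}
holds. Dividing by $A_t\int_{\mt}H\,d\mu$ and comparing with \eqref{vpr} or \eqref{apr} gives exactly the two signs claimed: $\frac{d}{dt}A_t\leq 0$ and $\frac{d}{dt}V_t\geq 0$, in both cases.

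\textbf{Step 2: Uniform two-sided bounds.} Monotonicity already gives one side of each bound in each case: $A_t\leq A_0$ and $V_t\geq V_0$. The opposite sides come from the isoperimetric inequality in $\HK$, which (among smooth bounded domains) states that a geodesic ball minimises area for given enclosed volume, equivalently maximises volume for given area. In case \eqref{vpr} the volume is constant, $V_t=V_0$, so the inequality yields a positive constant $a_1>0$ depending only on $V_0$ with $A_t\geq a_1$; set $a_2:=A_0$ and $v_1=v_2:=V_0$. In case \eqref{apr} the area is constant, $A_t=A_0$, so the inequality yields a constant $v_2$ depending only on $A_0$ with $V_t\leq v_2$; set $v_1:=V_0$ and $a_1=a_2:=A_0$. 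In both cases the required positive constants are obtained.

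\textbf{Main obstacle.} There is no serious technical obstacle: once one recognises the integrated Chebyshev structure hidden in hypotheses $i)$--$ii)$, part 1) is immediate, and part 2) is a one-line consequence of monotonicity plus the hyperbolic isoperimetric inequality. The only point requiring a little care is presenting both cases \eqref{vpr} and \eqref{apr} in a uniform way; the Chebyshev inequality makes this possible by giving the same estimate with the roles of $A_t$ and $V_t$ swapped.
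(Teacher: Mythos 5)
Your proposal is correct and follows the same route as the paper, which delegates part 1) to Lemma 3.1 of \cite{BeSi} and obtains part 2) from monotonicity combined with the hyperbolic isoperimetric inequality. The only cosmetic difference is that \cite{BeSi} (as the paper's own Proposition~\ref{hc} does in the analogous computation) introduces the reference value $\bar{H}$ defined by $\phi(\bar{H})=h(t)$ and shows the derivative equals $\int_{\mt}(H-\bar{H})(\phi(\bar{H})-\phi(H))\,d\mu\leq 0$, whereas you invoke the integral Chebyshev inequality directly; both rest on the same comonotonicity of $H$ and $\phi(H)$ coming from $\phi'>0$.
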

\begin{proof}
1) The same proof of Lemma 3.1 of \cite{BeSi} holds.\\
2) It follows from part 1) and the isoperimetric inequality in $\HK$. 
\end{proof}

We finish this section giving some remarks on the function $\phi$. If $\phi$ is a convex function, properties $ii),iii)$ and $iv)$ on $\phi$ just follows from the convexity: $ii)$ and $iv)$ are trivial, and for $iii)$  we can write $\frac{\phi(\alpha)}{\alpha}=\frac{\phi(\alpha)-\phi(0)}{\alpha}+\frac{\phi(0)}{\alpha}$.  By the convexity of $\phi$, the first addendum of the right side is an increasing function,  then $\left(\frac{\phi(\alpha)}{\alpha}\right)'\geq-\frac{\phi(0)}{\alpha^2}$, which implies $\phi'(\alpha)\alpha\geq\phi(\alpha)-\phi(0)$. Finally,
$$\lim_{\alpha\to\infty}\frac{\phi'(\alpha)\alpha^2}{\phi(\alpha)}\geq
\lim_{\alpha\to\infty}\frac{\phi(\alpha)-\phi(0)}{\phi(\alpha)}\alpha=\infty$$
Using also property $ii)$ on $\phi$.

\section{Preserving of $\hh$-convexity and its consequences}
The goal of this section is to  bound uniformly in time some geometrical quantities that allow to control the shape of the hypersurface. In order to do this, the crucial step is to prove that  $\hh$-convexity is preserved by the flow.

\begin{prop}\label{pr-h}
Let $\mo$ be an $\hh$-convex hypersurface of $\HK$, then $\mt$ is $\hh$-convex for any time the flow \eqref{fl} is defined.
\end{prop}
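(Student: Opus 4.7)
By the Borisenko--Miquel characterisation of $\hh$-convexity recalled in Section~2, $\mt$ is $\hh$-convex if and only if the symmetric $(0,2)$-tensor $P_{ij} := h_{ij} - a\, g_{ij}$ is positive semidefinite. I would therefore apply Hamilton's maximum principle for tensors to $P$ in order to propagate the condition $P \geq 0$ along the flow.

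Combining the formulas for $\partial_t h_{ij}$ and $\partial_t g_{ij}$ from Proposition~\ref{eq-ev}, and using $\Delta g_{ij} = 0$, the first step is to write
$$\partial_t P_{ij} = \phi'\, \Delta P_{ij} + N_{ij},$$
where the reaction $N_{ij}$ gathers the remaining algebraic terms from $\partial_t h_{ij}$ together with $2a(\phi - h) h_{ij}$ coming from $-a\, \partial_t g_{ij}$. Hamilton's principle then reduces preservation of $P \geq 0$ to verifying the null-eigenvector condition: whenever $P$ has a null eigenvector $v$, one must have $N_{ij} v^i v^j \geq 0$. At such a point I would pick an orthonormal frame that diagonalises $A$ with $v = e_1$, so $\lambda_1 = a$; the substitutions $h_{ij} v^i v^j = a$, $h_{il} h^l_j v^i v^j = a^2$, $g_{ij} v^i v^j = 1$, the cancellation of the $(\phi - h)$-contributions, and the identity $|A|^2 - 2aH + na^2 = \sum_i (\lambda_i - a)^2$ should yield the clean expression
$$N_{ij} v^i v^j = a\phi'\sum_{i=1}^n (\lambda_i - a)^2 + \phi''\,(\nabla_v H)^2.$$
The first summand is manifestly nonnegative, since $\phi' > 0$ by hypothesis i) and $\lambda_i \geq a$ under the $\hh$-convexity being propagated.

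The main obstacle is the second summand $\phi''(\nabla_v H)^2$, which may a priori be negative because $\phi''$ has no definite sign. This is exactly where hypothesis iv), $\phi''(\alpha)\alpha \geq -2\phi'(\alpha)$, enters: it provides a lower bound $\phi'' \geq -2\phi'/H$ that becomes effective precisely when $\phi'' < 0$. To close the argument I would exploit the geometric constraints available at a would-be first-contact point $(p_0, t_0)$: the spatial minimum of the smallest principal curvature forces $\nabla_k h_{11}(p_0) = 0$ for every $k$, and via the Codazzi identity $\nabla_k h_{ij} = \nabla_i h_{kj}$ (available in the space form $\HK$) this yields relations between $\nabla_v H$ and the off-diagonal derivatives $\nabla_i h_{1i}$. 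Feeding these identities together with iv) into the expression for $N(v,v)$ should allow one to absorb $\phi''(\nabla_v H)^2$ into the nonnegative sum $a\phi' \sum (\lambda_i - a)^2$, verifying $N(v, v) \geq 0$ and, via the tensor maximum principle, preservation of $\hh$-convexity.
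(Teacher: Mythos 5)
Your algebraic reduction to
$$N(v,v) = a\phi'\sum_{i=1}^n(\lambda_i - a)^2 + \phi''(\nabla_v H)^2$$
is correct, and you have correctly identified $\phi''(\nabla_v H)^2$ as the obstacle. But the way you propose to close the argument does not work, and this is a genuine gap. The first-contact relation does give $\nabla_k h_{11}(p_0)=0$ for all $k$, and Codazzi then gives $\nabla_1 h_{1j}(p_0)=0$ for all $j$; however $\nabla_v H = \nabla_1 H = \sum_{i>1}\nabla_1 h_{ii}$ is \emph{not} controlled by these identities. Moreover the nonnegative algebraic summand $a\phi'\sum(\lambda_i-a)^2$ may itself vanish at the touching point (all $\lambda_i=a$), so there is no hope of pointwise absorbing a gradient term of unknown size into it. Hypothesis~iv) alone only gives $\phi''\geq -2\phi'/H$, which does not bound $\phi''(\nabla_v H)^2$ from below without a compensating gradient term of the opposite sign. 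And Hamilton's tensor maximum principle in the form you invoke is not applicable when the reaction contains first derivatives of the solution.

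The paper circumvents this by evolving $S_{ij}=b_{ij}-\frac{1}{a}g_{ij}$, where $b_i^j$ is the inverse of the Weingarten operator, so that $\hh$-convexity becomes $S\leq 0$. The crucial gain is that differentiating the inverse produces compensating gradient terms: from $\Delta b_i^j = -b_s^j b_i^r\Delta h_r^s - 2 b_s^j\nabla_l b_i^r\nabla^l h_r^s$, the evolution of $S$ picks up $2\phi' b_s^j\nabla_l b_i^r\nabla^l h_r^s$ alongside the $\phi''$ contribution. Following Lemma~2.5 of \cite{Sh} and using Codazzi, one obtains
$$2\phi'b_{sj}\nabla_lb_i^r\nabla^lh_r^s-\phi''b_i^rb_{sj}\nabla_rH\nabla^sH \leq -\frac{1}{H}\left(2\phi'+\phi''H\right)\left(b_i^r\nabla_rH\right)\left(b_{sj}\nabla^sH\right),$$
which is $\leq 0$ precisely by hypothesis~iv); the remaining reaction applied to a null eigenvector of $S$ is then $-\frac{\phi'}{na}(H-na)^2\leq 0$, and the tensor maximum principle applies. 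To salvage your direct approach you would need a tensor carrying its own favourable gradient terms; the bare tensor $h_{ij}-ag_{ij}$ does not have any.
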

\begin{proof}

Since $\mo$ is $\hh$-convex, we can consider a time interval $[0,T^*)$, with $T^*<T$,  such that $\mt$ is strictly convex for any time $t\in[0,T^*)$. Then we can define $b_i^j$, be the inverse matrix of $h_i^j$. Let us define the tensor $S_{ij}=b_{ij}-\frac{1}{a}g_{ij}$. We have that $\hh$-convexity is equivalent to the fact that $S_{ij}\leq 0$. The first step is to compute the evolution equation of $S_{ij}$. By Proposition \ref{eq-ev} we get:
\begin{eqnarray*}
\partial_t h_r^s & = & \phi'\Delta h_r^s+\phi''\nabla_r H\nabla^s H-\left(\phi'H-\phi+h\right)h_r^lh_l^s\\
&&-a^2\left(\phi'H+\phi-h\right)\delta_r^s+na^2\phi'h_r^s.
\end{eqnarray*}
Since $b_i^kh_k^j=\delta_i^j$ we can compute:
\begin{eqnarray*}
\Delta b_i^j & = & -b_s^jb_i^r\Delta h_r^s-2b_s^j\nabla_lb_i^r\nabla^lh_r^s.
\end{eqnarray*}
Therefore
\begin{eqnarray*}
\partial_t b_i^j & = & -b_s^jb_i^r\partial_t h_r^s\\
& = & \phi'\Delta b_i^j+2\phi'b_s^j\nabla_l b_i^r\nabla^l h_r^s-b_s^jb_i^r\phi''\nabla_rH\nabla^sH\\
 && -\left(\phi'H-\phi+h\right)\delta_i^j-\phi'|A|^2b_i^j-a^2\left(\phi'H+\phi-h\right)b_i^rb_r^j-na^2\phi' b_i^j
\end{eqnarray*}

Finally, by Proposition \ref{eq-ev}, we have:
\begin{eqnarray}\label{ev-S}
\nonumber\partial_t S_{ij} & = & \phi'\Delta S_{ij}+2\phi'b_{sj}\nabla_lb_i^r\nabla^lh_r^s-\phi''b_i^rb_{sj}\nabla_rH\nabla^sH\\
&&+\left(\phi'H-\phi+h\right)g_{ij}-\phi'|A|^2b_{ij}+a^2\left(\phi'H+\phi-h\right)b_i^rb_{rj}\\
\nonumber&&-na^2\phi'b_{ij}-2\left(\phi-h\right)g_{ij}+\frac{2}{a}\left(\phi-h\right)h_{ij}.
\end{eqnarray}

Analogously to the proof of Lemma $2.5$ in \cite{Sh}, we can use Codazzi equation and the fact that $\mt$ is strictly convex in order to estimate the gradient terms in \eqref{ev-S}:
$$
\begin{array}{l}
2\phi'b_{sj}\nabla_lb_i^r\nabla^lh_r^s-\phi''b_i^rb_{sj}\nabla_rH\nabla^sH\\
\phantom{aa} = -2\phi'b_j^sb_k^rb_i^c\nabla_lb_c^k\nabla^lh_{rs}-\phi''b_i^rb_{sj}\nabla_rH\nabla^sH\\
\phantom{aa} \leq-\dfrac{1}{H}\left(2\phi'+\phi''H\right)\left(b_i^r\nabla_rH\right)\left(b_{sj}\nabla^sH\right).
\end{array}
$$
Let $V$ be a null eigenvector of $S$ with unit norm. We  apply the reaction terms in the equation \eqref{ev-S} to $V$. What we get can be estimate as follows on $[0,T^*)$:
$$
\begin{array}{l}
-\dfrac{1}{a^2H}\left(2\phi'+\phi'' H\right)\left(\nabla_iHV^i\right)^2+\phi'\left(2H-\dfrac{|A|^2}{a^2}-na\right)\\
\phantom{aa} \leq -\dfrac{\phi'}{na}\left(H-na\right)^2\leq 0.
\end{array}
$$
In the last line, we used the hypothesis $iv)$ on the function $\phi$ and the fact that $|A|^2\geq\frac{H^2}{n}$ for any $n$-dimensional submanifold. Then $S_{ij}\leq0$ by the maximum principle for symmetric tensors that can be found in Theorem 9.1 of \cite{Ha}. Thus we have that, until $\mt$ is strictly convex, the solution is $\hh$-convex. If, by contradiction, it exists a first time $\bar{T}$ where the solution is not strictly convex, we can apply the previous argument on the interval $[0,\bar{T})$  to conclude that the solution is $\hh$-convex on $[0,\bar{T}]$. In particular, in $t=\bar{T}$  $h_{ij}\geq ag_{ij}$ holds. Then we find a contradiction.


%

\end{proof}

An immediate consequence of Proposition \ref{pr-h} is that, along the flow, $H\geq na>0$ at any space-time point.
By Proposition \ref{pr-h}, we are able to deduce some geometrical properties.  We begin proving that the inradius is uniformly bounded along the flow.

\begin{lem}\label{inradius}
Let $\rho_t$ be the inradius of the evolving domain $\Omega_t$ at time $t$. Then there are two positive constants $c_1$ and $c_2$ such that
$$
c_1\leq\rho_t\leq c_2.
$$
\end{lem}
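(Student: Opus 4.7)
The plan is to sandwich the evolving domain $\Omega_t$ between two concentric geodesic balls and then compare their volumes with the uniform two-sided bound on $V_t$ from Lemma \ref{l1}. The essential input is Proposition \ref{pr-h}: $\hh$-convexity is preserved along the flow, so Theorem \ref{iperb2}(1) applies at every time and forces the circumradius relative to the inball center to stay quantitatively close to $\rho_t$.

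First I would establish the upper bound. Let $q_t$ be the center of an inball of $\Omega_t$, so that the geodesic ball of radius $\rho_t$ centered at $q_t$ is contained in $\Omega_t$. Denoting by $V(R)$ the volume of a geodesic ball of radius $R$ in $\HK$, which is a continuous strictly increasing function of $R$ diverging as $R\to\infty$, this inclusion yields $V(\rho_t)\leq V_t \leq v_2$; inverting the monotone function $V$ gives $\rho_t\leq c_2$.

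For the lower bound, Theorem \ref{iperb2}(1), applicable at every time thanks to Proposition \ref{pr-h}, gives $\Omega_t \subseteq B(q_t, f(\rho_t))$ with
$$
f(\rho) := \rho + a\ln\frac{(1+\sqrt{\ta(\rho/2)})^2}{1+\ta(\rho/2)}.
$$
Therefore $V_t \leq V(f(\rho_t))$. Since $\ta(0)=0$, the correction term in $f$ vanishes as $\rho\to 0^+$, so $f(\rho)\to 0$ and consequently $V(f(\rho))\to 0$. Combined with the lower bound $V_t \geq v_1 > 0$ from Lemma \ref{l1}, this forces $\rho_t$ to stay bounded below by some $c_1 > 0$.

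There is no real obstacle once Proposition \ref{pr-h} is granted: the argument reduces to a monotonicity comparison for the hyperbolic ball volume as a function of the radius, together with the geometric bound on the circumradius of an $\hh$-convex domain supplied by Theorem \ref{iperb2}(1). The one point meriting a line of verification is that $f(\rho)\to 0$ as $\rho\to 0^+$, which is immediate from the Taylor expansion of $\ta$ at the origin.
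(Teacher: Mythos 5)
Your argument is correct and follows the same route as the paper's (which itself simply defers to Lemma 4.1 of Cabezas-Rivas and Miquel): bound $\rho_t$ above by comparing the inball volume with $V_t\leq v_2$, and bound it below by using Theorem \ref{iperb2}(1) to trap $\Omega_t$ inside $B(q_t,f(\rho_t))$ and then comparing with $V_t\geq v_1$. The paper merely packages this via the explicit inverse functions $\psi=V^{-1}$ and $\xi=f^{-1}$, giving $\xi(\psi(V_t))\leq\rho_t\leq\psi(V_t)$ and the constants $c_1=\xi(\psi(v_1))$, $c_2=\psi(v_2)$.
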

\begin{proof}
The proof is the same of Lemma 4.1 of \cite{CaMi} with minor modifications in  case of the area preserving flow. Let $\psi$ be the inverse function of $s\mapsto vol(\mathbb {S}^n)\int_0^s\sk(l)dl$ and let $\xi$ be the inverse function of $s\mapsto s+a \ln \frac{\left(1+\ta\left(s/2 \right)\right)^2 }{1+\ta\left(s/2 \right) }$. Note that they are positive increasing functions.  
Proceeding like in Lemma 4.1 of \cite{CaMi} we get

$$
\xi(\psi(V_t))\leq\rho_t\leq\psi(V_t).
$$
By Lemma \ref{l1}, 
 the thesis follows with $c_1=\xi\left( \psi(v_1)\right) $ and $c_2=\psi(v_2)$. 
\end{proof}
As immediate corollary, using also the triangular inequality and Theorem \ref{iperb2} we obtain
\begin{cor}\label{bounddist}
For any $t\in[0,T)$, and $p,q\in\Omega_t$, we have
$$d_{\mathbb{H}}(p,q)<2(c_2+a\ln 2).$$
\end{cor}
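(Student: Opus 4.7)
The plan is to combine the inradius upper bound from Lemma \ref{inradius} with the diameter-type estimate in part (1) of Theorem \ref{iperb2}, and then close with a triangle inequality.

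First, for each $t\in[0,T)$, pick a point $q_0=q_0(t)$ which is the center of an inball of $\Omega_t$. By Proposition \ref{pr-h}, $\mt=\partial\Omega_t$ is $\hh$-convex, so Theorem \ref{iperb2}(1) applies and gives the strict bound
$$\max_{r\in\partial\Omega_t} d_{\HK}(q_0,r) < \rho_t + a\ln 2.$$
Since $\Omega_t$ is (in particular) geodesically convex and contains $q_0$ in its interior, the radial geodesic from $q_0$ through any interior point of $\Omega_t$ must eventually leave $\Omega_t$ through $\partial\Omega_t$; hence the same bound controls the distance from $q_0$ to any point of $\Omega_t$, not just to boundary points. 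Combining this with the inradius estimate $\rho_t\leq c_2$ from Lemma \ref{inradius}, we get
$$d_{\HK}(q_0,p) < c_2 + a\ln 2, \qquad \forall p\in\Omega_t.$$

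Finally, for any two points $p,q\in\Omega_t$, the triangle inequality yields
$$d_{\HK}(p,q) \leq d_{\HK}(p,q_0) + d_{\HK}(q_0,q) < 2(c_2+a\ln 2),$$
which is exactly the claim. There is essentially no obstacle here: the only mildly subtle point is the passage from the maximum over $\partial\Omega_t$ (which is what Theorem \ref{iperb2}(1) literally gives) to the maximum over $\Omega_t$, and this is handled by the convexity of $\Omega_t$ as just indicated. Everything else is a direct application of previously established results.
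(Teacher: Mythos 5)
Your proposal is correct and follows the same route the paper indicates (the paper simply asserts this is "immediate" from the triangle inequality, Lemma~\ref{inradius}, and Theorem~\ref{iperb2}(1)). You have merely spelled out the details, including the minor point about passing from the supremum over $\partial\Omega_t$ to the supremum over $\Omega_t$, which is indeed handled as you say.
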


\section{Long time existence}
 In this section our goal is to find a uniform upper bound on the curvatures of the hypersurfaces. This result will allow us to establish the long time existence of the solution together with the existence of a limit hypersurface. 
From Lemma \ref{inradius} we have a positive lower bound on the inner radii  $\rho_t$, so we can take $0<\rho\leq c_1$, with $c_1$ given in Lemma \ref{inradius}. 
\begin{lem} 
\label{pi}
There exists $\tau=\tau(a,n,\mo)>0$ with the following property: for all $(\qb,\tb)\in\Omega_{\tb}\times[0,T)$ such that $B(\qb,\rho)\subset\Omega_{\tb}$, then 
$$B(\qb,\rho/2)\subset\Omega_t\hspace{1cm}\forall t\in[\tb,\min\{\tb+\tau,T\})$$
\end{lem}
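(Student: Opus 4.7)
The plan is to use the standard parabolic comparison principle together with an auxiliary family of shrinking geodesic balls centered at $\qb$ to estimate from below the distance of $\qb$ to $\partial\Omega_t$. The key observation is that, since $h(t)$ depends only on time, a geodesic sphere centered at $\qb$ stays a sphere under the flow with velocity $(-\phi(H)+h(t))\nu$, because the mean curvature $H=na\,\co(r)$ is constant on the sphere of radius $r$. Thus the evolution of the test sphere reduces to the ODE
$$\dot r(t)=-\phi(na\,\co(r(t)))+h(t),\qquad r(\tb)=\rho.$$

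First, I would invoke comparison. At a hypothetical first touching point between $\partial B(\qb,r(t))$ and $\partial\Omega_t$, the outer normals coincide and, since $B(\qb,r(t))\subset\Omega_t$ up to that instant, the principal curvatures of $\partial B(\qb,r(t))$ dominate those of $\partial\Omega_t$, so in particular the mean curvature of the ball is not smaller than that of $\partial\Omega_t$ at the touching point. By condition $i)$ on $\phi$, the ball's outward normal speed $-\phi(H_{\partial B})+h(t)$ is then bounded above by that of $\partial\Omega_t$, so the inclusion is preserved; hence $B(\qb,r(t))\subset\Omega_t$ throughout the lifespan of $r$.

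Next, I would estimate $r(t)$ from below via the ODE. By Proposition \ref{pr-h}, $H\geq na$ on $\mt$, so the monotonicity of $\phi$ together with the explicit expression \eqref{vpr} or \eqref{apr} yields $h(t)\geq\phi(na)>0$ for every $t$. Dropping this nonnegative contribution and using that $r\mapsto\phi(na\,\co(r))$ is nonincreasing in $r$, I obtain
$$\dot r(t)\geq-\phi(na\,\co(r(t)))\geq-\phi(na\,\co(\rho/2))$$
as long as $r(t)\geq\rho/2$. Integrating, $r(t)\geq\rho-\phi(na\,\co(\rho/2))(t-\tb)$, so choosing
$$\tau:=\frac{\rho}{2\phi(na\,\co(\rho/2))}$$
forces $r(t)\geq\rho/2$ on $[\tb,\tb+\tau]$. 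Consequently $B(\qb,\rho/2)\subset B(\qb,r(t))\subset\Omega_t$ on the desired interval, and $\tau$ depends only on $a$, $n$, $\phi$, and $\rho\leq c_1$, where $c_1$ was determined by $\mo$ through Lemma \ref{inradius}.

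The main point to verify carefully is the validity of the parabolic comparison principle for the \emph{constrained} flow, but here this is straightforward: since $h(t)$ is a single scalar function of time, common to both evolving hypersurfaces, it plays the role of a time-dependent external forcing in the standard first-touching-point argument, and no additional structure is needed beyond the monotonicity of $\phi$. The rest reduces to the elementary ODE estimate above.
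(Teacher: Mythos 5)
Your proof is correct and follows essentially the same strategy as the paper: compare $\Omega_t$ with a shrinking geodesic ball centered at $\qb$ and control the time needed for the radius to halve. The only difference in execution is that you evolve the test ball under the \emph{constrained} flow (keeping the $h(t)$ forcing term) and then discard $h(t)\geq 0$ when estimating the ODE, whereas the paper evolves the test ball under the \emph{standard} flow $r_B'=-\phi(n\co(r_B))$ from the outset and defines $\tau=\int_{\rho/2}^{\rho}\frac{ds}{\phi(n\co(s))}$ directly; in both cases the monotonicity of $\phi$ and the nonnegativity of $h$ make the touching-point comparison go through. Your explicit $\tau=\rho/(2\phi(n\co(\rho/2)))$ is a crude lower bound for the paper's integral formula, and equally valid. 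One small notational slip: the mean curvature of a geodesic sphere of radius $r$ in $\HK$ is $n\co(r)=na\coth(ar)$ in the paper's conventions, not $na\,\co(r)$, but this does not affect the argument.
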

\begin{proof}
Let $\tb,\qb$ be like in the hypotheses. We consider the geodesic sphere centered at $\qb$ that evolves by the standard flow with initial datum $\rho$, i.e the radius $r_B(t)$ satisfies  
\begin{equation*}
   \begin{cases}
   r'_B(t)=-\phi(n\co(r_B))
   \\r_B(\tb)=\rho
   \end{cases}
\end{equation*}
We define $\tau$ as the time taken by the geodesic sphere of initial radius $\rho$ to contract its radius to $\rho/2$, i.e.
$$\tau=\int_{\rho/2}^\rho\frac{ds}{\phi(n\co(s))}$$
Notice that $\tau$ does not depend neither on $\qb$ nor from $\tb$, but only on $\rho$.
Let $r(x,t)=r_{\qb}(F_t(x))$. Then
$$\partial_t r=(h-\phi(H))\langle\nu,\partial_r\rangle.$$

Then we define the function $u(x,t):=r(x,t)-r_B(t)$ for $t\in[\tb,\tb+\tau]$, and  compute the evolution
\begin{equation}
\label{evf}
\partial_t u(x,t)=(h-\phi(H))\langle\nu,\partial_r\rangle+\phi(n\co(r_B)),
\end{equation}

where $\phi(H)=\phi(H(x,t))$. 
 Suppose that there exists a first time $t^*$ such that for some $x^*\in\m$ the ball  touches the hypersurface $\mathcal{M}_{t^*}$ at the point $F(x^*,t^*)$. Then we have 
$$n\co(r_B(t^*))\geq H(x^*,t^*)\hspace{1cm}\partial_t u(x^*,t^*)\leq0 
$$
 Thus, by the monotonicity of $\phi$ and the fact that $\langle\nu,\partial_r\rangle\leq1$, we obtain from \eqref{evf}
$$\partial_t u(x^*,t^*)\geq h\langle\nu,\partial_r\rangle>0$$

From this contradiction we get the result.
\end{proof}
It is also useful to define, as in \cite{CaMi}, the function:
$$\sigma_t(r_q):=\sk(r_q)\langle\nu,\partial_{r_q}\rangle$$
with $q$ a given point in $\Omega_t$. Following analogous calculations as in \cite{CaMi}, we get the evolution of $\sigma$:
$$\partial_t\sigma=\phi'\Delta\sigma+\phi'\sigma|A|^2+(h-\phi-\phi'H)\ck(r_q)$$
\begin{lem} \label{c} Given any $\tb\in[0,T)$ we have that, in $[\tb,\min\{\tb+\tau, T\})$:
\begin{enumerate}
\item there exist constants $C,D>0$ such that $$C\leq r_{\qb}\leq D;$$
\item taken $c:=\frac{a\sk(C)\ta(C)}{2}$ , $$\sigma_t(\qb)-c\geq c.$$
\end{enumerate}
where $\qb$ is taken as as in Lemma \ref{pi}
\end{lem}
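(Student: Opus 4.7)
The plan is to deduce both parts as direct consequences of the previous results: part (1) will follow from Lemma \ref{pi} (for the lower bound) together with Corollary \ref{bounddist} (for the upper bound), and part (2) will follow by plugging part (1) into Theorem \ref{iperb2}(2). I do not expect any genuine obstacle here; the lemma is essentially a bookkeeping statement packaging earlier estimates into a form ready to be fed into the evolution equation of $\sigma$.

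For part (1), I would argue as follows. Fix $x\in\m$ and $t\in[\tb,\min\{\tb+\tau,T\})$. By Lemma \ref{pi} we have $B(\qb,\rho/2)\subset\Omega_t$, so in particular $\qb$ is an interior point of $\Omega_t$ and
\[
r_{\qb}(F_t(x))\;=\;d_{\mathbb{H}}(\qb,F_t(x))\;\geq\;\rho/2.
\]
This gives the lower bound with $C:=\rho/2$. For the upper bound, since $\qb,F_t(x)\in\overline{\Omega_t}$, Corollary \ref{bounddist} yields
\[
r_{\qb}(F_t(x))\;\leq\;d_{\mathbb{H}}(\qb,F_t(x))\;\leq\;2(c_2+a\ln 2),
\]
so we may take $D:=2(c_2+a\ln 2)$. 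Both constants are independent of $\tb$, $\qb$ and $t$ in the claimed interval.

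For part (2), the inclusion $B(\qb,\rho/2)\subset\Omega_t$ also tells us that $d_{\mathbb{H}}(\qb,\partial\Omega_t)\geq \rho/2=C$. Since by Proposition \ref{pr-h} the hypersurface $\mt$ is $\hh$-convex (and hence the domain $\Omega_t$ is $\hh$-convex), Theorem \ref{iperb2}(2) applied with interior point $\qb$ and boundary point $F_t(x)$ gives
\[
\langle\nu(F_t(x)),\partial_{r_{\qb}}\rangle\;\geq\;a\,\ta\!\bigl(d_{\mathbb{H}}(\qb,\partial\Omega_t)\bigr)\;\geq\;a\,\ta(C),
\]
where we used that $\ta$ is increasing. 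Combining this with the lower bound $\sk(r_{\qb})\geq \sk(C)$ from part (1) and the monotonicity of $\sk$, we obtain
\[
\sigma_t\;=\;\sk(r_{\qb})\,\langle\nu,\partial_{r_{\qb}}\rangle\;\geq\;a\,\sk(C)\,\ta(C)\;=\;2c,
\]
so that $\sigma_t-c\geq c$, which is the desired inequality. The only mild points to check are that $\qb$ lies strictly inside $\Omega_t$ throughout the interval (ensured by Lemma \ref{pi}) and that the quantities $C$ and $c$ produced above do not depend on $\tb$ or $\qb$; both are immediate from the construction.
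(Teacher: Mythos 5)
Your proof is correct and follows the same approach as the paper: part (1) comes from Lemma \ref{pi} and Corollary \ref{bounddist}, and part (2) from Theorem \ref{iperb2}(2) via the monotonicity of $\sk$ and $\ta$. You have simply spelled out the ``trivial computations'' that the paper leaves implicit.
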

\begin{proof}\
\begin{enumerate}\vspace{1mm}
\item  As a consequence of Lemma \ref{inradius} and Lemma \ref{pi}, on the time interval $[\tb,\min\{\tb+\tau, T\})$ we have $\frac{c_1}{2}\leq\frac{\rho}{2}\leq r_{\qb}$. On the other side, by Corollary \ref{bounddist}, $r_{\qb}\leq 2(c_2+a\ln2)$.
\item It follows by Theorem $2.2$ part $2)$ and some trivial computations.
\end{enumerate}
\end{proof}
\begin{prop}\label{ubv} There exists a positive constant $c_3=c_3(\mo,n,a)$ such that
$$\phi(H)\leq c_3\hspace{1cm}\text{on }[0,T).$$
\end{prop}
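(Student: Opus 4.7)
The plan is to apply the maximum principle to the auxiliary function $W=\phi(H)/(\sigma_t(\qb)-c)$, which by Lemma \ref{c} is smooth and positive on a short time slab $[\tb,\min\{\tb+\tau,T\})$ as soon as $\qb$ is taken to be the center of an inball of $\Omega_{\tb}$ at time $\tb$. On such a slab, Lemma \ref{c} also provides the uniform bounds $f:=\sigma_t(\qb)-c\geq c$ and $\ck(r_{\qb})\leq\cosh(aD)=:M$ that will be crucial.

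Starting from the evolution equation of $\phi$ in Proposition \ref{eq-ev} and the evolution of $\sigma$ recalled before Lemma \ref{c}, a direct computation gives $\partial_t W=\phi'\Delta W+(2\phi'/f)\langle\nabla W,\nabla\sigma\rangle+R$. At an interior space-time maximum of $W$ the identities $\nabla W=0$ and $\Delta W\leq 0$ translate into $\nabla\phi=W\nabla\sigma$ and $\Delta\phi\leq W\Delta\sigma$; plugging $\phi=Wf$ and $\sigma=f+c$ into $R$ and using $|A|^2\geq na^2$ to discard the nonpositive term $\phi'h(na^2-|A|^2)$, the condition $\partial_tW\geq 0$ collapses (after dropping the other negative pieces) to
\begin{equation*}
\phi'|A|^2Wc\leq W^2f\ck(r_{\qb})+W\phi'H\ck(r_{\qb}).
\end{equation*}
Dividing by $W>0$, using $Wf=\phi$, $|A|^2\geq H^2/n$ (a consequence of $\hh$-convexity) and $\ck(r_{\qb})\leq M$, this rearranges to
\begin{equation*}
\frac{\phi'(H)H^2}{\phi(H)}\Bigl(\frac{c}{n}-\frac{M}{H}\Bigr)\leq M.
\end{equation*}
As soon as $H\geq 2nM/c$ the left parenthesis is at least $c/(2n)$, so $\phi'H^2/\phi\leq 2nM/c$; hypothesis $iii)$ then confines $H$ to an interval $[na,H^*]$ with $H^*=H^*(a,n,\mo)$, whence $\phi(H)\leq\phi(H^*)$ and $W\leq\phi(H^*)/c$ at the interior maximum.

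Combining the two possibilities of the maximum principle, $\sup W$ on the slab is bounded by $\max\{\sup_{t=\tb}W,\phi(H^*)/c\}$. Starting at $\tb=0$, where $W(\cdot,0)$ is bounded in terms of the initial datum, and iterating on consecutive slabs $[\tb_k,\tb_{k+1}]$ with $\qb_k$ taken as an inball center at $\tb_k$ (which exists with radius at least $c_1$ by Lemma \ref{inradius}), one propagates the bound on $W$, and hence on $\phi(H)$, forward in time up to $T$. I expect the main obstacle to be the algebraic rearrangement of the reaction term $R$: precisely matching the positive growth contributions $W^2f\ck$ and $W\phi'H\ck$ against the single negative control $-\phi'|A|^2Wc$ is the non-homogeneous analogue of the cancellation exploited in \cite{CaMi} for $\phi(H)=H$, and getting all signs and coefficients right is the only delicate point; the appeal to hypothesis $iii)$ then replaces the degree-one structure used there.
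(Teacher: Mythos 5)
Your computation at the interior space--time maximum is correct and matches the structure of the paper's estimate: you arrive at the same competition between the bad term $(1+\phi'H/\phi)W^2\ck(D)$ and the good term $-c\phi'|A|^2W/(\sigma-c)$, reduce it to a scalar inequality on $\phi'H^2/\phi$, and invoke hypothesis $iii)$ to confine $H$ at the maximum. That part is sound and is indeed the same mechanism the paper uses.

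The gap is in the slab-iteration step. On slab $k$ you conclude $\sup W_k\leq\max\{\sup_{\tb_k}W_k,\phi(H^*)/c\}$ and then want to pass this to slab $k+1$. But $W_{k+1}$ is built from the new reference point $\qb_{k+1}$, so at time $\tb_{k+1}$ the denominator changes, and all you can guarantee is
$W_{k+1}(\cdot,\tb_{k+1})\leq\frac{\sk(D)-c}{c}\sup W_k$,
because $\sigma-c$ ranges over the whole interval $[c,\sk(D)-c]$. Since $2c=\sk(C)\tanh(aC)<\sk(C)\leq\sk(D)$, the factor $(\sk(D)-c)/c$ is strictly larger than $1$, so the recursion $\sup W_{k+1}\leq\max\bigl\{\frac{\sk(D)-c}{c}\sup W_k,\phi(H^*)/c\bigr\}$ can grow geometrically; iterating over infinitely many slabs gives no uniform bound on $[0,T)$. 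The paper avoids this by pushing the maximum-principle analysis one step further: once $\wb(t)=\sup_{\mt}W$ exceeds the threshold one actually has the differential inequality $D_+\wb\leq-\wb^2$, and ODE comparison then yields $\wb(t)\leq\max\{1/(t-\tb),\phi(\tilde C)/c\}$ on the slab --- a bound \emph{independent} of $\wb(\tb)$. Together with the choice $c/\phi(\tilde C)\leq\tau$ this gives $W\leq\phi(\tilde C)/c$ on $[\tb+c/\phi(\tilde C),\tb+\tau)$ for every slab, no matter how the reference point changes, which is precisely what makes the gluing close. The distinction between ``$W$ at an interior maximum is bounded'' and ``$W$ becomes bounded after a fixed delay, independently of its value at the start of the slab'' is the missing idea; without it the iteration does not control $\phi(H)$ up to time $T$.
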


\begin{proof}
On any time interval $[\tb,\min\{\tb+\tau, T\})$, we consider
$$W(x,t):=\frac{\phi(H(x,t))}{\sigma_t(\qb)-c}$$
with $\qb$ as in Lemma \ref{pi} and $c$ as in Lemma \ref{c}.
Standard computations show that
\begin{eqnarray*}
(\partial_t-\phi'\Delta )W  &=& \frac{2\phi'}{\sigma-c}\langle\nabla W,\nabla\sigma\rangle-h\frac{\phi'}{\sigma-c}(|A|^2-na^2)-W\frac{{h}}{\sigma-c}\ck(r_{\pb})\\
&& +\left(1+\frac{\phi' H}{\phi}\right)W^2\ck(r_{\pb})-\frac{c}{\sigma-c}\phi'|A|^2W-na^2\phi'W
\end{eqnarray*}
By virtue of the  $\hh$-convexity we have $|A|^2-na^2\geq0$, then 
$$(\partial_t-\phi'\Delta )W \leq\frac{2\phi'}{\sigma-c}\langle\nabla W,\nabla\sigma\rangle+\left(1+\frac{\phi' H}{\phi}\right)W^2\ck(D)-\frac{c\phi'H^2}{n(\sigma-c)}W$$
where we also used the fact that $r_{\pb}\leq D$ and $|A|^2\geq\frac{H^2}{n}$.
We define
$$\wb(t):=\sup_{\mt}W(x,t)\hspace{1.5cm}X(t):=\{x\in\m | W(x,t)=\wb(t)\}$$
then 
$$D_+ W\leq\ck(D)\wb^2+\wb\sup_{X(t)}\frac{\phi'H}{\sigma-c}\left\{\ck(D)-\frac{cH}{n}\right\}$$
 Let $\tilde{C}>0$ big enough such that
\begin{equation}\label{Ctilde}
\begin{cases}
\tilde{C}\geq\frac{3n}{c}\ck(D)\\
\frac{c}{\phi(\tilde{C})}\leq\tau
.
\end{cases}
\end{equation}

Suppose that there exists a time $t^*$ such that  $\wb(t^*) \geq \phi(\tilde{C})/c$. Then, using the bound $\sigma-c \geq c$ and the monotonicity of $\phi$ we have that $H(x^*,t^*) \geq \tilde{C}$ for any $x^* \in X(t^*)$.  Notice that the first condition of \eqref{Ctilde} implies that if $H\geq\tilde{C}$, then $\ck(D)-\frac{cH}{n}\leq-\frac{2cH}{3n}$.
Then, we get at time $t=t^*$
\begin{align*}
D_+\wb&\leq\ck(D)\wb^2-\frac{2c}{3n}\wb\sup_{X(t^*)}\frac{\phi'H^2}{\sigma-c}\\
&=\wb^2\sup_{X(t^*)}\left\{\ck(D)-\frac{2c\phi'H^2}{3n\phi}\right\}\\
\end{align*}
By condition $iii)$ on $\phi$ , we can suppose $\tilde{C}$ sufficiently large such that $H\geq\tilde{C}$ implies
$$\ck(D)-\frac{2c\phi'H^2}{3n\phi}\leq-1$$
Thus at $t=t^*$ we have
$$D_+\wb\leq-\wb^2$$

Standard comparison argument then implies that 
\begin{equation}\label{w1}
W\leq\max\left\{\max_{\mo}W,\frac{\phi(\tilde{C})}{c}\right\}\hspace{1cm}\text{on  }[0,\min\{\tau,T\})
\end{equation}
in the case $\tb=0$, and
\begin{equation*}
W\leq\max\left\{\frac{1}{t-\tb},\frac{\phi(\tilde{C})}{c}\right\}\hspace{1cm}\text{on  }[\tb,\min\{\tb+\tau,T\})
\end{equation*}
for a general $\tb$. Then we also have
\begin{equation}\label{w2}
W\leq\frac{\phi(\tilde{C})}{c}\hspace{0.5cm}\text{on  }\left[\tb+\frac{c}{\phi(\tilde{C})},\min\{\tb+\tau,T\} \right).
\end{equation}
Since $\tb$ is arbitrary, combining \eqref{w1} and \eqref{w2} and
using the second condition of \eqref{Ctilde} that allows to cover the entire time interval, we obtain
$$W\leq\max\left\{\max_{\mo}W,\frac{\phi(\tilde{C})}{c} \right\}\hspace{1cm}\text{on  }t \in[0,T),$$
which implies the assertion, since $\phi \leq (\sk(D)-c) W$. 
\end{proof}
\begin{cor} The quantities $H,h$ and $|A|$ are uniformly bounded along the flow.
\end{cor}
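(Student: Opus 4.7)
The plan is to deduce each of the three bounds in turn from Proposition 4.3 together with the $\hh$-convexity established in Proposition 3.1.

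First, I would bound $H$ from above. By condition $i)$ the function $\phi$ is strictly increasing on $[0,\infty)$, and by condition $ii)$ it diverges at infinity, so it admits a continuous inverse $\phi^{-1}$ on its range. Since Proposition 4.3 gives $\phi(H)\leq c_3$ pointwise in space and time, applying $\phi^{-1}$ yields
\begin{equation*}
H\leq \phi^{-1}(c_3)=:K
\end{equation*}
uniformly on $\m\times[0,T)$. Combined with the lower bound $H\geq na$ coming from $\hh$-convexity (a direct consequence of Proposition \ref{pr-h}, since $\lambda_i\geq a$ for all $i$), this gives a two-sided uniform bound on the mean curvature.

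Next, I would bound $h(t)$ in each of the two cases for the constraint. In the volume-preserving case \eqref{vpr}, I would estimate directly
\begin{equation*}
\phi(na)\leq h(t)=\frac{1}{A_t}\int_{\mt}\phi(H)\,d\mu\leq c_3,
\end{equation*}
using the monotonicity of $\phi$, the lower bound $H\geq na$, and Proposition 4.3. In the area-preserving case \eqref{apr}, since $H>0$ everywhere one has
\begin{equation*}
\phi(na)\leq\frac{\int_{\mt}H\phi(H)\,d\mu}{\int_{\mt}H\,d\mu}\leq c_3
\end{equation*}
by the same monotonicity argument applied to the weighted average. In both cases $h$ is bounded above and below by positive constants independent of $t$.

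Finally, I would bound $|A|$. From $\hh$-convexity all principal curvatures satisfy $0<a\leq\lambda_i$, and they sum to $H\leq K$, so each $\lambda_i\leq K$ individually. Therefore
\begin{equation*}
|A|^2=\sum_{i=1}^n\lambda_i^2\leq nK^2,
\end{equation*}
which is the desired uniform bound. There is no real obstacle here: the work was already done in Proposition 4.3 and Proposition \ref{pr-h}; the corollary is just the extraction of the consequences using the structural assumptions $i)$ and $ii)$ on $\phi$ and the pinching $a\leq\lambda_i\leq H$ forced by $\hh$-convexity.
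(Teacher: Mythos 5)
Your proof is correct and follows essentially the same route as the paper: use the uniform upper bound $\phi(H)\leq c_3$ from Proposition 4.3 together with the monotonicity and divergence of $\phi$ (properties $i)$ and $ii)$) to bound $H$, read off $h$ as an average of $\phi(H)$, and then use the pinching $a\leq\lambda_i\leq H$ from $\hh$-convexity to bound $|A|$. You merely spell out the intermediate steps — the explicit $\phi^{-1}(c_3)$, the two-sided bound on the weighted averages defining $h$ in both the volume- and area-preserving cases, and the elementary bound $|A|^2\leq nK^2$ — that the paper compresses into one line.
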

\begin{proof}
By  property $ii)$ on $\phi$ , an upper bound on $\phi$ implies a bound on $H$. Then, by  Property \ref{ubv}, $H$ is uniformly bounded. The boundedness of $h$ also follows from the boundedness of $\phi$. Thanks to $\hh$-convexity, $|A|\leq H$, and so $|A|$ is bounded too. 
\end{proof}



From the bounds on $H$ it follows that also $\phi'$ is uniformly bounded from both sides, and then the flow is uniformly parabolic. Then, following the proof of Theorem $3.10$ in \cite{BeSi}, we can bound uniformly the curvatures and all its derivatives. Thus, we obtain the following theorem.
\begin{theorem}
The solution $\mt$ of the flow \ref{fl} exists for any time. Moreover, up to time subsequences and  space isometries, $\mt$ converges to a smooth limit $\m_{\infty}$.
\end{theorem}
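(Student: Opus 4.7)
The plan is to combine the a priori bounds already obtained in the previous corollary with the uniform parabolicity of the flow to run a bootstrap argument, and then to extract a smooth limit by a compactness argument after normalizing the position via isometries of $\HK$.

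For the long time existence part, I would first observe that since $H$ is uniformly bounded both from below (by $na$, from Proposition \ref{pr-h}) and from above (from Proposition \ref{ubv} together with property $ii)$ on $\phi$), the quantity $\phi'(H)$ is pinched between two positive constants on $\m\times[0,T)$. Hence the flow \eqref{fl}, when written as a scalar PDE for the support-like function of $\mt$, is uniformly parabolic with $C^1$ coefficients that are themselves uniformly bounded. At this point the standard strategy is to invoke Krylov--Safonov $C^{2,\alpha}$ estimates followed by a parabolic Schauder bootstrap (as in the scheme of Theorem 3.10 of \cite{BeSi}, which is explicitly cited in the paragraph preceding the theorem). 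This produces uniform bounds on $|\nabla^m A|$ for every $m \geq 0$ on the whole interval $[0,T)$. A standard contradiction argument then rules out $T < \infty$: one could extend the flow past $T$ by short time existence applied to $\m_T$, contradicting the maximality of $T$. Therefore $T = \infty$.

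For the existence of a subsequential limit, the difficulty is that $\HK$ is non-compact, so in principle the hypersurfaces $\mt$ could escape to infinity even though their intrinsic geometry is controlled. This is where the isometry group enters: for each $t$ one picks a center $q_t$ of an inball of $\Omega_t$ (which exists and has radius bounded between $c_1$ and $c_2$ by Lemma \ref{inradius}) and composes $F_t$ with an isometry $\Phi_t$ of $\HK$ sending $q_t$ to a fixed base point $o$. Denote the resulting family by $\tilde F_t = \Phi_t \circ F_t$; by Corollary \ref{bounddist}, all the hypersurfaces $\tilde F_t(\m)$ are contained in a fixed geodesic ball $B(o,R)$ of $\HK$, with $R=2(c_2+a\ln 2)$. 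The intrinsic bounds on $|\nabla^m A|$ are preserved under isometries of the ambient space, so the family $\{\tilde F_t\}_{t\geq 0}$ is uniformly bounded in $C^\infty$.

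At this point an Arzelà--Ascoli type argument, applied inductively on the order of derivatives, yields a sequence $t_k\to\infty$ along which $\tilde F_{t_k}$ converges in $C^\infty(\m,\HK)$ to a smooth embedding $F_\infty:\m\to\HK$; its image $\m_\infty := F_\infty(\m)$ is the desired smooth limit hypersurface, $\hh$-convex by passage to the limit in the condition $\lambda_i \geq a$. The main obstacle is the one described above, namely preventing loss of compactness due to the non-compactness of $\HK$; the isometric normalization handles exactly this point, and is why the statement of the theorem is formulated only up to ambient isometries.
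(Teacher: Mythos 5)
Your proposal takes essentially the same route as the paper: the paper derives the two-sided bound on $\phi'$ from the uniform bounds on $H$, concludes uniform parabolicity, cites the bootstrap argument of Theorem 3.10 in \cite{BeSi} for the higher-order curvature estimates, and then asserts long time existence and subsequential convergence up to isometries. You spell out the same steps (Krylov--Safonov plus Schauder bootstrap, the contradiction argument ruling out $T<\infty$, and the normalization by ambient isometries using the inradius bound and Corollary \ref{bounddist} to apply Arzel\`a--Ascoli), so the proposal is a correct and somewhat more explicit rendering of what the paper compresses into one paragraph.
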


\section{Exponential convergence to a sphere}

We will prove that the limit hypersurface $\m_{\infty}$ has to be a geodesic sphere by showing that the mean curvature tends to a constant value.  Then, we will show that the rate of the convergence is exponential. From this, we will deduce in particular that the hypersurfaces converge to a geodesic sphere with no need to add isometries.
\subsection*{Smooth convergence}
\begin{prop}\label{hc} The velocity $\phi(H)$ tends uniformly to $h$, i.e.
$$\lim_{t\to\infty}\max_{\mt}|\phi(H(x,t))-h(t)|=0$$
\end{prop}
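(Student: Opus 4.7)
The plan is to show that $\|\phi(H) - h\|_{L^2(\mt)}^2$ is integrable in time, and then to use the uniform regularity established in Section 4 to upgrade this to uniform pointwise decay. I first focus on the volume-preserving case \eqref{vpr}, where $\int_\mt(\phi - h)\,d\mu = 0$ by definition of $h$. By Proposition \ref{eq-ev},
\begin{equation*}
-\frac{d}{dt}A_t = \int_\mt H(\phi(H) - h)\,d\mu = \int_\mt(H - \bar H)(\phi(H) - \phi(\bar H))\,d\mu,
\end{equation*}
where $\bar H := A_t^{-1}\int_\mt H\, d\mu$. By $\hh$-convexity ($H \geq na$) together with the uniform upper bound on $H$ coming from Proposition \ref{ubv}, $H$ ranges in a compact interval on which $\phi'$ is bounded below by some $m > 0$. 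The mean value theorem then gives $-\frac{d}{dt}A_t \geq m\int_\mt (H - \bar H)^2 d\mu$, while the uniform upper bound on $\phi'$ on the same range, combined with the $L^2$-minimality of the mean, yields $\int_\mt (\phi - h)^2 d\mu \leq C\int_\mt (H - \bar H)^2 d\mu$. Since by Lemma \ref{l1} $A_t$ is monotone decreasing and bounded below, integrating in time produces $\int_0^\infty\int_\mt (\phi - h)^2 d\mu\, dt < \infty$.

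The next step is pointwise-in-time decay. The uniform $C^k$ bounds on $\mt$ from Section 4 control $H$, $|A|$ and all their derivatives, hence also $\phi$, $\phi'$, $h$ and the time derivative $h'(t)$ (the latter computed from the evolution equations of $\phi$ and of $d\mu$ in Proposition \ref{eq-ev}). This gives a uniform bound on $\frac{d}{dt}\int_\mt (\phi - h)^2 d\mu$, and combined with the time-integrability a standard real-analysis lemma forces $\int_\mt (\phi - h)^2 d\mu \to 0$. Finally, the uniform $C^1$ bounds provide a Lipschitz constant $L$ for $x\mapsto \phi(H(x,t)) - h(t)$ on $\mt$ independent of $t$: if $\max_\mt|\phi - h|(t_k) \geq \varepsilon$ along some $t_k \to \infty$, then $|\phi - h| \geq \varepsilon/2$ on an intrinsic geodesic ball in $\mt_{t_k}$ of radius $\varepsilon/(2L)$, and the uniform bound on $|A|$ ensures such a ball has $n$-dimensional area bounded below by a positive constant depending only on $\varepsilon$, contradicting the $L^2$ decay. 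Thus $\max_\mt|\phi - h| \to 0$.

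For the area-preserving case \eqref{apr}, I would use $V_t$ in place of $A_t$: by Proposition \ref{eq-ev}, $\frac{d}{dt}V_t = \int_\mt (h - \phi)\,d\mu$, which is nonnegative by the same Chebyshev-type correlation applied to $(H, \phi(H))$, and the rest of the argument carries over with one modification. Since $h$ is now a weighted average of $\phi$ by $H$, the estimate $\int_\mt(\phi - h)^2 d\mu \leq C \int_\mt (H - \bar H)^2 d\mu$ requires splitting $\phi - h = (\phi - \phi(\bar H)) + (\phi(\bar H) - h)$ and controlling each piece by the variance of $H$. I expect this quantitative Chebyshev-type estimate in the weighted case, together with the careful bookkeeping needed to bound $h'(t)$, to be the main technical step; everything else reduces to standard real analysis applied on top of the uniform geometric bounds already established.
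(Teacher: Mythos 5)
Your argument is correct and lands on the same underlying mechanism as the paper's proof: a Chebyshev-type covariance identity converts the monotone, bounded quantity $A_t$ (resp.\ $V_t$) into a dissipation estimate in terms of $\int_\mt(\phi-h)^2\,d\mu$, and the uniform regularity from Section 4 then upgrades $L^2$ control to uniform control. The differences are worth naming. In the volume-preserving case you take $\bar H = A_t^{-1}\int_\mt H\,d\mu$, while the paper takes $\bar H$ defined by $\phi(\bar H)=h$; both give $\frac{d}{dt}A_t=-\int_\mt(H-\bar H)(\phi(H)-\phi(\bar H))\,d\mu$, and the two-sided bound on $\phi'$ on the compact range of $H$ makes the choice immaterial. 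For the passage from $L^2$ to uniform decay you proceed in two steps (time-integrability of $\|\phi-h\|_{L^2}^2$ plus a bound on its $t$-derivative, giving $\|\phi-h\|_{L^2}^2\to 0$ by a Barbalat-type lemma; then a spatial Lipschitz-plus-area argument), whereas the paper packs both steps into a single parabolic-neighbourhood argument: from any space-time point where $|\phi-h|\geq\alpha$, the bound $|\phi-h|\geq\alpha/2$ spreads to a fixed parabolic ball $B((\bar x,\bar t),r(\alpha))$, forcing $A_t$ to drop by a fixed amount $\eta(\alpha)$ over a fixed time interval, which can only happen finitely often. In the area-preserving case the paper is noticeably shorter: from $\frac{d}{dt}V_t\geq\bigl(\inf\phi'/\sup H\bigr)\int_\mt(H-\bar H)^2\,d\mu$ it first deduces $\max_\mt|H-\bar H|\to 0$ by the same neighbourhood argument, and then concludes $\phi(H)\to h$ by continuity of $\phi$ together with $|\phi(\bar H)-h|=\bigl|\int H(\phi-\phi(\bar H))/\int H\bigr|\leq\sup\phi'\cdot\max|H-\bar H|\to 0$, avoiding the quantitative weighted estimate $\int(\phi-h)^2\leq C\int(H-\bar H)^2$ that you sketch (which does work, but needs Cauchy--Schwarz on the cross term and the lower bound $\int H\geq na\,A_t$, as you anticipate). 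One small point both routes leave implicit: the lower area bound on small intrinsic balls in $\mt$ needs, beyond the uniform bound on $|A|$, a uniform lower bound on the injectivity radius of $\mt$; this is supplied here by convexity and the inradius control of Lemma \ref{inradius}, but it is worth stating.
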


\begin{proof}
The proof is the same of Proposition 4.4 of \cite{BeSi} for the analogous problem in the Euclidean space. We report the proof for completeness.

We begin with the volume preserving case.
For any $t$, let $\bar{H}(t)$ such that $\phi(\bar{H}(t))=h(t)$. Then we compute
\begin{align*}
\frac{d}{dt}A_t&=\int_{\mt}Hh \, d\mu-\int_{\mt}H\phi(H) \, d\mu\\
&=\int_{\mt}(H-\bar{H})(\phi(\bar{H})-\phi(H)) \, d\mu\\
&=-\int_{\mt}|H-\bar{H}||\phi(H)-\phi(\bar{H})| \, d\mu.
\end{align*}
Now, using the bound on $\phi'$ we obtain
\begin{align*}
\frac{d}{dt}A_t&\leq - \frac{1}{\sup\phi'} \int_{\mt}|\phi(H)-\phi(\bar{H})|^2 \, d\mu.\\
&=- \frac{1}{\sup\phi'}\int_{\mt}|\phi(H)-h|^2 \, d\mu.
\end{align*}
Suppose that $|\phi(H)-h|=\alpha$ for some $\alpha>0$ at some point $(\xb,\tb)$. The  bounds on the derivatives of the curvatures imply that $H$ is uniformly Lipschitz continuous, and then there exists a radius $r(\alpha)$, not depending on $(\xb,\tb)$, such that 
$$|\phi(H)-h|>\frac{\alpha}{2}\hspace{1cm}\text{on }B((\xb,\tb),r(\alpha))$$
where $B((\xb,\tb),r(\alpha))$ is the parabolic neighbourhood centered at $(\xb,\tb)$ of radius $r(\alpha)$. 
Then
\begin{equation}\label{as}
\frac{d}{dt}A_t<-\eta(\alpha)\hspace{1cm}\forall t\in[\tb-r(\alpha),\tb+r(\alpha)]
\end{equation}
for some $\eta>0$ only depending on $\alpha$.

By Lemma \ref{l1}, $A_t$ is positive and decreasing in time, and so property \eqref{as} can occur only on a finite number of time intervals, for any given $\alpha>0$. This shows that $|\phi(H)-h|$ tends to zero uniformly.

 In the area preserving case, setting $\bar{H}:=\frac{1}{A_t}\int_{\mt}H$ we can compute similarly 
\begin{align*}
\partial_t V_t&=\frac{1}{H}\int_{\mt}[\phi(H)-\phi(\bar{H})][H-\bar{H}]\\
&\geq\left(\inf\frac{\phi}{H}\right)\int_{\mt}[H-\bar{H}]^2
\end{align*}
then, as in the volume preserving case, we obtain that that $H$ converges uniformly to $\bar{H}$. This allows to pass  the limit under the integral sign, obtaining
\begin{align*}
\lim_{t\to\infty}\max_{\mt}|\phi(H)-h|&=\lim_{t\to\infty}\max_{\mt}|\phi(H)-\phi(\bar{H})+\phi(\bar{H})-h|\\
&\leq\lim_{t\to\infty}\max_{\mt}|\phi(H)-\phi(\bar{H})|+\lim_{t\to\infty}|\phi(\bar{H})-h|\\
&=\lim_{t\to\infty}\left|\phi(\bar{H})-\frac{\int_{\mt} H\phi(H)}{\int_{\mt} H}\right|=0
\end{align*}
using also the fact that $\phi$ is a continuous function. 
\end{proof}
Proposition \ref{hc} implies that any possible limit of subsequences of $\mt$ has constant mean curvature, and so, by a classical result of Alexandrov \cite{Al}, it is a geodesic sphere. Standard techniques, see e.g. \cite{An1,CaMi}, allow now to conclude that the whole family $\mt$ converges smoothly to a geodesic sphere  up to isometries.  
\subsection*{Exponential rate}
In order to prove the exponential rate, we will follow a similar method to \cite{GLW}, studying the behaviour of a suitable  ratio which is a modification of the ratio $Q=\frac{K}{H^n}$ used by Schulze in \cite{Sch2} for the Euclidean ambient space.  The analysis will be simpler than the analogous part in \cite{GLW} for two reasons. First, because $\phi$ is a function, even if non homogeneous, of the mean curvature, but especially because we already have the convergence to a geodesic sphere.
We will use the same notations as in \cite{GLW}. Let define the {\em perturbed} Weingarten operator  $\hti^i_{j}=h^i_j-a\delta^i_j$. Its trace, norm and determinant will be denoted as   $\Hti=\tr\hti^i_j$, $\Ati^2=\hti_i^j\hti_j^i$ and $\kti=\det\hti^i_{j}$ respectively. We also indicate by $\bti^i_j$ the inverse matrix of $\hti^i_j$, and by $\Bti$ its trace.
Then, we define
$$\qti=\frac{\kti}{\Hti}.$$ 
 Since the hypersurfaces $\mt$ approach uniformly to a geodesic sphere as $t$ goes to infinity, then $\mt$ is strictly $\hh$-convex for $t$ sufficiently large. Then $\qti$ is well defined and strictly positive for $t$ sufficiently large. This fact makes things work even if we choose an initial datum not necessarly  {\em strictly} $\hh$-convex, but just $\hh$-convex. Notice also that $\qti\leq\frac{1}{n^n}$, and the equality holds if and only if the hypersurface is totally umbilic. Then we know that $\qti$ converges smoothly to the constant value $\frac{1}{n^n}$. Our goal is to show that this convergence is exponential.
\begin{prop}\label{evoluzioni_tilde}
The quantities $\kti$, $\qti$ evolve according to
\begin{eqnarray*}
\partial_t\kti&=&\hspace{2mm}\phi'\Delta\kti-\frac{(n-1)}{n}\frac{\phi'|\nabla\kti|^2}{\kti}+
\frac{\kti}{\Hti^2}\phi'|\Hti\nabla\hti^i_j-\hti^i_j\nabla\Hti|_{g,\bti}^2\\
&&-\frac{\Hti^{2n}}{n\kti}\phi'\left|\nabla\frac{\kti}{\Hti^n}\right|^2
+\kti\phi''\bti^i_j\nabla_i\Hti\nabla^j\Hti+\kti\Hti(\phi-\phi'H-h)\\
&&+2an\kti(\phi-h)+n\kti\phi'\Ati^2+a\kti\phi'\Ati^2\Bti;\\
\partial_t\qti &=&\phi'\Delta\qti+\frac{(n+1)}{n\Hti^n}\phi'\langle\nabla\qti,\nabla\Hti^n\rangle
-\frac{(n-1)}{n\kti}\phi'\langle\nabla\qti,\nabla\kti\rangle\\
&&-\frac{\qti^{-1}}{n}\phi'|\nabla\qti|^2+\frac{\qti}{\Hti^2}\phi'|\Hti\nabla\hti^i_j-\hti^i_j\nabla\Hti|_{g,\bti}^2
+\qti\phi''|\nabla\Hti|^2_{\bti-\frac{n}{\Hti}g}\\
&&+\frac{\qti}{\Hti}(\phi'H-\phi+h)(n\Ati^2-\Hti^2)+a\qti\Ati^2\left(\Bti^2-\frac{n^2}{\Hti}\right).
\end{eqnarray*}
where
\begin{align*}
|\Hti\nabla\hti^i_j-\hti^i_j\nabla\Hti|_{g,\bti}^2&:=
b^n_mb^s_r(\Hti \nabla_i \hti^m_n -\hti^m_n\nabla_i\Hti)(\Hti\nabla^i\hti^r_s-\hti^r_s\nabla^i\Hti)\\
|\nabla\Hti|^2_{\bti-\frac{n}{\Hti}g}&:=\left(\bti^{ij}-\dfrac{n}{\Hti}g^{ij}\right)\nabla_i\Hti\nabla_j\Hti
\end{align*}
\end{prop}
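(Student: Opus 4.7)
The strategy is to derive both evolution equations in three systematic steps: (i) rewrite the evolution of $h_{ij}$ from Proposition \ref{eq-ev} in terms of the perturbed Weingarten operator $\hti^i_j$; (ii) apply the matrix-calculus identities for the determinant to obtain the evolution of $\kti$; and (iii) use the quotient rule for $\qti = \kti/\Hti$.

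For step (i), since $\hti^i_j$ differs from $h^i_j$ by the covariantly constant $a\delta^i_j$, one has $\partial_t\hti^i_j = \partial_t h^i_j = (\partial_t g^{ik})h_{kj} + g^{ik}\partial_t h_{kj}$. Substituting $h^i_j = \hti^i_j + a\delta^i_j$, $H = \Hti + na$, $|A|^2 = \Ati^2 + 2a\Hti + na^2$, and $(h^2)^i_j = (\hti^2)^i_j + 2a\hti^i_j + a^2\delta^i_j$ into Proposition \ref{eq-ev}, substantial cancellation yields
\begin{equation*}
(\partial_t - \phi'\Delta)\hti^i_j = (\phi - h - \phi'H)(\hti^2)^i_j + [2a(\phi - h) + \phi'\Ati^2]\hti^i_j + a\phi'\Ati^2\delta^i_j + \phi''\nabla^i\Hti\nabla_j\Hti.
\end{equation*}

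For step (ii), use the logarithmic-derivative identity $\partial_t\kti = \kti\bti^j_i\partial_t\hti^i_j$ together with its spatial analogue, derived by differentiating $\nabla_k\kti/\kti = \bti^j_i\nabla_k\hti^i_j$ and using $\nabla_k\bti^j_i = -\bti^j_p\bti^q_i\nabla_k\hti^p_q$:
\begin{equation*}
\Delta\kti = \frac{|\nabla\kti|^2}{\kti} + \kti\bti^j_i\Delta\hti^i_j - \kti\bti^j_p\bti^q_i\nabla_k\hti^p_q\nabla^k\hti^i_j.
\end{equation*}
Contracting the formula from step (i) with $\bti^j_i$ and using the traces $\bti^j_i(\hti^2)^i_j = \Hti$, $\bti^j_i\hti^i_j = n$, $\bti^j_i\delta^i_j = \Bti$ produces the reaction terms of the statement together with the $\kti\phi''\bti^i_j\nabla_i\Hti\nabla^j\Hti$ contribution. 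The remaining raw quadratic gradient contribution $-\phi'|\nabla\kti|^2/\kti + \phi'\kti\bti^j_p\bti^q_i\nabla_k\hti^p_q\nabla^k\hti^i_j$ is then repackaged into the three expressions of the statement via the algebraic identity obtained by expanding both $|\Hti\nabla\hti^i_j-\hti^i_j\nabla\Hti|^2_{g,\bti}$ and $|\nabla(\kti/\Hti^n)|^2$ and matching coefficients, using the trace identity $\bti^j_i\nabla_k\hti^i_j = \nabla_k\kti/\kti$.

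For step (iii), applying the quotient rule gives
\begin{equation*}
(\partial_t-\phi'\Delta)\qti = \frac{(\partial_t-\phi'\Delta)\kti}{\Hti} - \frac{\kti\,(\partial_t-\phi'\Delta)\Hti}{\Hti^2} + \frac{2\phi'\langle\nabla\kti,\nabla\Hti\rangle}{\Hti^2} - \frac{2\kti\phi'|\nabla\Hti|^2}{\Hti^3}.
\end{equation*}
The evolution of $\Hti$ inherits from Proposition \ref{eq-ev} through the constant shift $\Hti = H - na$, yielding $(\partial_t-\phi'\Delta)\Hti = \phi''|\nabla\Hti|^2 + (\phi-h)(\Ati^2 + 2a\Hti)$. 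Substituting this together with the evolution of $\kti$ from step (ii), and then reorganizing the gradient contributions into combinations involving $\nabla\qti$, $\nabla\Hti^n$, $\nabla\kti$, the norm $|\Hti\nabla\hti^i_j-\hti^i_j\nabla\Hti|^2_{g,\bti}$, and the weighted norm $|\nabla\Hti|^2_{\bti-(n/\Hti)g}$, produces the claimed formula. The main obstacle throughout is purely algebraic: the computation generates dozens of raw terms, and the key difficulty lies in recognizing the precise regroupings that express them as the compact ``umbilicity-detecting'' expressions of the statement, a form that is not suggested by the direct calculation but is chosen specifically to make the later sign analysis via the maximum principle transparent.
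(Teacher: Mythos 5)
Your outline reproduces the paper's own derivation step for step: rewrite $\partial_t h^i_j$ in terms of $\hti^i_j$ (and your reaction coefficients $[2a(\phi-h)+\phi'\Ati^2]$ on $\hti^i_j$ and $a\phi'\Ati^2$ on $\delta^i_j$ check out), then use $\partial_t\kti=\kti\bti^j_i\partial_t\hti^i_j$ with the determinant Laplacian identity and a Schulze-type regrouping (Lemma 2.1 of \cite{Sch2}, which the paper cites and which you propose to verify by direct expansion — same content), and finally the quotient rule for $\qti$ combined with the evolution of $\Hti^n$. The only slip is the displayed quotient rule in step (iii), which is written with $\Hti$ rather than $\Hti^n$ in all the denominators and gradient pairings; since $\qti=\kti/\Hti^n$, those $\Hti$'s should read $\Hti^n$ (as your surrounding text, which refers to $\nabla\Hti^n$ and correspondingly needs $\partial_t\Hti^n$ rather than $\partial_t\Hti$, already presupposes), but this is a notational lapse rather than a different method.
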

\begin{proof}
Some trivial computations show that $\Hti=H-na$ and $\Ati^2=|A|^2+na^2-2aH$. Hence, by Proposition \ref{eq-ev}, we get
\begin{eqnarray*}
\partial_t\hti_i^j & =& \phi'\Delta\hti_i^j+\phi''\nabla_i H\nabla^j H+\left(\phi-h-\phi'H\right)\left(\hti_i^l\hti_l^j+2a\hti_i^j+a^2\delta_i^j\right)\\
&&+\phi'\left(\Ati^2+2a\Hti+2na^2\right)\left(\hti_i^j+a\delta_i^j\right)-a^2\left(\phi'H+\phi-h\right)\delta_i^j,\\
\partial_t\Hti^n & =& \phi'\Delta\Hti^n+n\Hti^{n-2}(\phi''\Hti-(n-1)\phi')|\nabla \Hti|^2+n\Hti^{n-1}\left(\phi-h\right)\left(\Ati^2+2a\Hti\right). 
\end{eqnarray*}

Moreover, using twice the law of the derivative of the determinant, we have 
\begin{eqnarray*}
\Delta\kti & = & \nabla_r\left(\kti\bti_j^i\nabla^r\hti_i^j\right)\\
 & = & \kti\bti_j^i\Delta\hti_i^j+\dfrac{|\nabla\kti|^2}{\kti}+\kti\nabla_r\bti_j^i\nabla^r\hti_i^j.
\end{eqnarray*}

We can use these equations to compute the evolution of $\kti$:
\begin{eqnarray*}
\partial_t\kti& = & \kti\bti_j^i\frac{\partial\hti_i^j}{\partial t}\\
&=&\phi'\Delta\kti-\phi'\dfrac{|\nabla\kti|^2}{\kti}-\kti\phi'\nabla_r\bti_j^i\nabla^r\hti_i^j+\kti\phi''\bti_j^i\nabla_i H\nabla^j H\\
&&+\kti\left(\phi-h-\phi'H\right)\left(\Hti+2an+a^2\Bti\right)-a^2\kti\Bti\left(\phi'H+\phi-h\right)\\
&&+\kti\phi'\left(\Ati^2+2a\Hti+2na^2\right)\left(n+a\Bti\right)\\
 & = & \phi'\Delta\kti-\phi'\dfrac{|\nabla\kti|^2}{\kti}-\kti\phi'\nabla_r\bti_j^i\nabla^r\hti_i^j+\kti\phi''\bti_j^i\nabla_i H\nabla^j H\\
&& +\kti\Hti(\phi-\phi'H-h)+2an\kti(\phi-h)+n\kti\phi'\Ati^2+a\kti\phi'\Ati^2\Bti.
\end{eqnarray*}
As in the proof of Lemma $2.1$ of \cite{Sch2}, we have
$$\kti\phi'\nabla_r\bti_j^i\nabla^r\hti_i^j=-\frac{1}{n}\frac{\phi'|\nabla\kti|^2}{\kti}-
\frac{\kti}{\Hti^2}\phi'|\Hti\nabla\hti^i_j-\hti^i_j\nabla\Hti|_{g,\bti}^2
+\frac{\Hti^{2n}}{n\kti}\phi'\left|\nabla\frac{\kti}{\Hti^n}\right|^2
$$
and then
\begin{eqnarray*}
-\phi'\dfrac{|\nabla\kti|^2}{\kti}-\kti\phi'\nabla_r\bti_j^i\nabla^r\hti_i^j &=&
-\frac{(n-1)}{n}\frac{\phi'|\nabla\kti|^2}{\kti}\\
&&+\frac{\kti}{\Hti^2}\phi'|\Hti\nabla\hti^i_j-\hti^i_j\nabla\Hti|_{g,\bti}^2
-\frac{\Hti^{2n}}{n\kti}\phi'\left|\nabla\frac{\kti}{\Hti^n}\right|^2.
\end{eqnarray*}
From the last equality, we get the evolution of $\kti$.
 
By definition of $\qti$, we get:
$$
\Delta\qti=\dfrac{\Delta\kti}{\Hti^n}-\dfrac{\kti\Delta\Hti^n}{\Hti^{2n}} -\dfrac{2}{\Hti^{2n}}\nabla_r\kti\nabla^r\Hti^n+2\dfrac{\qti}{\Hti^{2n}}|\nabla\Hti^n|^2.
$$
Then we have  
\begin{eqnarray*}
\partial_t\qti & =& \frac{1}{\Hti^n}\frac{\partial \kti}{\partial t}-\frac{n}{\Hti^{n+1}}\frac{\partial\Hti}{\partial t}\\
& = & \phi'\Delta\qti +\dfrac{2}{\Hti^{2n}}\nabla_r\kti\nabla^r\Hti^n-2\dfrac{\qti}{\Hti^{2n}}|\nabla\Hti^n|^2\\
&&+n(n-1)\frac{\qti}{\Hti^2}\phi'|\nabla\Hti|^2-\frac{n-1}{n}\phi'\frac{|\nabla\kti|^2}{\kti\Hti^n}\\
&&-\frac{\qti^{-1}}{n}\phi'|\nabla\qti|^2+\frac{\qti}{\Hti^2}\phi'|\Hti\nabla\hti^i_j-\hti^i_j\nabla\Hti|_{g,\bti}^2
+\qti\phi''|\nabla\Hti|^2_{\bti-\frac{n}{\Hti}g}\\
&&+\frac{\qti}{\Hti}(\phi'H-\phi+h)(n\Ati^2-\Hti^2)+a\qti\Ati^2\left(\Bti^2-\frac{n^2}{\Hti}\right).
\end{eqnarray*}
The conclusion follows observing that
\begin{eqnarray*}
\langle\nabla\qti,\nabla\Hti^n\rangle&=&\dfrac{1}{\Hti^n}\langle\nabla\kti,\nabla\Hti^n\rangle
-\dfrac{\qti}{\Hti^n}|\nabla\Hti^n|^2,\\
\langle\nabla\qti,\nabla\kti\rangle&=&
\dfrac{1}{\Hti^n}|\nabla\kti|^2-\dfrac{\qti}{\Hti^n}\langle\nabla\Hti^n,\nabla\kti\rangle.
\end{eqnarray*}
\end{proof}


Similarly as in \cite{GLW} we consider the function $f=\dfrac{1}{n^n}-\dfrac{\kti}{\Hti^n}$. 
 By the results of the previous section, we already know that $\mathcal{M}_t$ converges to a sphere and so $f$ converges smoothly to zero. Now we want to prove that this convergence is exponentially fast.

\begin{prop}
There are a time $\bar{t}>0$ and two constants $c,\ \delta>0$ such that for every time $t\geq\bar{t}$ we have : 
$$
f\leq ce^{-\delta t}.
$$
\end{prop}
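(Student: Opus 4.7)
The plan is to run the maximum principle on $f$ and reduce the claim to a differential inequality of the form $D_+\bar f(t)\le -\delta\,\bar f(t)$ for $\bar f(t):=\max_{x\in\mt}f(x,t)$, from which exponential decay follows by ODE comparison. By the smooth convergence obtained in the previous subsection, $\mt$ is $C^\infty$-close to a fixed limit geodesic sphere for $t$ large; in particular all curvature quantities are uniformly bounded and deviations from umbilicity tend to zero.

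At a point $x^*(t)\in\mt$ where $f(\cdot,t)$ attains its spatial maximum, Hamilton's trick gives $\nabla\qti=0$ and $\Delta\qti\ge 0$ at $(x^*(t),t)$. Applying Proposition~\ref{evoluzioni_tilde} and using $\partial_t f=-\partial_t\qti$, every gradient contribution in the evolution of $\qti$ drops out, the term $-\phi'\Delta\qti$ is non-positive, and the expression $\frac{\qti}{\Hti^2}\phi'|\Hti\nabla\hti^i_j-\hti^i_j\nabla\Hti|_{g,\bti}^2$, which is non-negative in $\partial_t\qti$, contributes non-positively to $\partial_t f$. What survives at $x^*(t)$ is
\[
\partial_t f \;\le\; -\frac{\qti}{\Hti}(\phi'H-\phi+h)(n\Ati^2-\Hti^2) \;-\; \qti\phi''|\nabla\Hti|^2_{\bti-\frac{n}{\Hti}g} \;-\; a\qti\Ati^2\!\left(\Bti^2-\frac{n^2}{\Hti}\right).
\]

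The first term is the driving one. Since $n\Ati^2-\Hti^2=\frac{1}{2}\sum_{i,j}(\tilde{\lambda}_i-\tilde{\lambda}_j)^2\ge 0$, and Proposition~\ref{hc} combined with $\phi'H\ge\phi'(na)\cdot na>0$ (from property $i)$ and $H\ge na$) yields $\phi'H-\phi+h\ge c_0>0$ for $t$ large, this term is manifestly non-positive. A Taylor expansion of $\qti=\kti/\Hti^n$ around the umbilic state $\tilde{\lambda}_i=\Hti/n$ (writing $\tilde{\lambda}_i=\bar\lambda(1+\eta_i)$ with $\sum\eta_i=0$) produces the uniform comparison $f\le C_1(n\Ati^2-\Hti^2)$ valid for $t$ large, so this term alone bounds $\partial_t f$ from above by $-\delta_0\,f(x^*(t))$ for some $\delta_0>0$.

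The main obstacle is to show that the last two terms contribute only lower-order corrections. For the $\phi''$-term, smooth convergence implies $|\nabla\Hti|\to 0$ uniformly while $\phi''$, $\qti$ and $\bti-\frac{n}{\Hti}g$ remain uniformly bounded, so for any $\varepsilon>0$ this contribution is bounded in absolute value by $\varepsilon\, f(x^*(t))$ once $t$ is large. The third term is the most delicate, since $\Bti^2-n^2/\Hti$ does not obviously vanish at a generic umbilic configuration; however, along any geodesic sphere $\qti\equiv 1/n^n$ forces $\partial_t\qti\equiv 0$, so the reactive pieces in Proposition~\ref{evoluzioni_tilde} must cancel algebraically at umbilicity. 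After grouping with the Schulze-type term and Taylor-expanding around the limit sphere (using also that $|\nabla\hti^i_j|\to 0$ uniformly), this last term is controlled by $\varepsilon\,(n\Ati^2-\Hti^2)\le \varepsilon C_1 f(x^*(t))$ for $t$ large. Choosing $\varepsilon$ small enough gives $\partial_t f(x^*(t))\le -\frac{\delta_0}{2}\,f(x^*(t))$ for all $t\ge\bar t$, whence Hamilton's trick together with ODE comparison yields $\bar f(t)\le c\, e^{-\delta t}$ with $\delta=\delta_0/2$, as claimed.
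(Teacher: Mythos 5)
Your plan — running a maximum principle directly on $\bar f(t)=\max_{\mt} f$ via Hamilton's trick — is a legitimate alternative framing, and you correctly locate the driving reaction term and obtain the comparison $f\leq C_1(n\Ati^2-\Hti^2)$ in the umbilic limit (this is exactly what Lemma~2.5 of \cite{Sch2} gives). However, there are two problems, one minor and one that is a genuine gap.

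The minor one concerns the last reaction term: it is in fact the easiest, not the most delicate. In the evolution equation for $f$ the term reads $-a\phi'\qti\Ati^2\bigl(\Bti-\tfrac{n^2}{\Hti}\bigr)$ (the ``$\Bti^2$'' in Proposition~\ref{evoluzioni_tilde} appears to be a typo; the displayed evolution of $f$ has $\Bti$). By the harmonic--arithmetic mean inequality applied to the eigenvalues of $\hti^i_j$, one has $\Bti-\tfrac{n^2}{\Hti}\geq 0$ pointwise, so this term is non-positive and can simply be discarded; no Taylor expansion or cancellation argument is needed.

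The serious gap is the treatment of the $\phi''$-term. You assert that, because $|\nabla\Hti|\to 0$ uniformly and $\qti,\ \phi'',\ \bti-\tfrac{n}{\Hti}g$ stay bounded, the contribution $-\qti\phi''|\nabla\Hti|^2_{\bti-\frac{n}{\Hti}g}$ is bounded by $\varepsilon\,f(x^*(t))$ for $t$ large. But $f(x^*(t))\to 0$ as well, so ``the term tends to zero'' does not give a bound of the form $\varepsilon f$; one would need an a priori comparison between the decay rate of $|\nabla\Hti|^2$ at the maximum of $f$ and the decay rate of $f$ itself, and neither smooth convergence nor the umbilicity estimates provide such a comparison. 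Moreover, by discarding the non-negative Schulze gradient term $\frac{\qti}{\Hti^2}\phi'|\Hti\nabla\hti^i_j-\hti^i_j\nabla\Hti|^2_{g,\bti}$ — which does \emph{not} vanish at the maximum of $f$, since it involves $\nabla\hti$ and $\nabla\Hti$ rather than $\nabla f$ — you have thrown away precisely the term that the paper uses to absorb the $\phi''$ contribution. The paper's argument is pointwise and does not go to the maximum at all: it combines Huisken's pinching gradient estimate $|\Hti\nabla\hti^i_j-\hti^i_j\nabla\Hti|^2\geq\tfrac12\varepsilon^2\Hti^2|\nabla\Hti|^2$ with the bound $\bigl|\bti^i_j-\tfrac{n}{\Hti}\delta^i_j\bigr|\leq\tfrac{\sqrt{n}(n-1)(1-n\varepsilon)}{(1-(n-1)\varepsilon)\Hti}$ (which tends to $0$ as $\varepsilon\to\tfrac1n$, i.e.\ as $t\to\infty$) to show that, for $t$ large, the Schulze term dominates the $\phi''$-term \emph{everywhere} on $\mt$, leaving only $\nabla f$-gradient terms and the good reaction term. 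Without this absorption step the differential inequality $\partial_t f\leq-\delta f$ at the spatial maximum is not established, and the proposal does not close.
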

\begin{proof}
 By Proposition \ref{evoluzioni_tilde}  we can compute the evolution equation for $f$: 
\begin{equation}\label{evf}
\begin{split}
\frac{\partial f}{\partial t} &= \phi'\Delta f-\frac{(n+1)}{n\Hti^n}\phi'\langle\nabla f,\nabla\Hti^n\rangle
+\frac{(n-1)}{n\kti}\phi'\langle\nabla f,\nabla\kti\rangle+\frac{\qti^{-1}}{n}\phi'|\nabla f|^2\\
&\hspace{4mm}-\frac{\qti}{\Hti^2}\phi'|\Hti\nabla\hti^i_j-\hti^i_j\nabla\Hti|_{g,\bti}^2
-\qti\phi''|\nabla\Hti|^2_{\bti-\frac{n}{\Hti}g}\\
 &\hspace{4mm}-a\phi'\qti\Ati^2\left(\Bti-\frac{n^2}{\Hti}\right)-\frac{\qti}{\Hti}\left(\phi' H-\phi+h\right)\left(n\Ati^2-\Hti^2\right).
 \end{split}
\end{equation}
First, we want to prove that the terms on the second line of the right member of \eqref{evf} give a negative contribution.
By Lemma $2.3$ part $ii)$ of \cite{Hu1}, we can show that, if there exists $\varepsilon\in(0,\frac{1}{n}]$  such that $\tilde{\lambda}_1\geq\varepsilon\Hti$, then 
\begin{equation}\label{grHu}
|\Hti\nabla\hti^i_j-\hti^i_j\nabla\Hti|^2\geq\frac{1}{2}\varepsilon^2\Hti^2|\nabla\Hti|^2
\end{equation}
For $t$ that grows to infinity, we can take $\varepsilon$ closer and closer to $\frac{1}{n}$. Proceeding as in the proof of Theorem $4.4$ of \cite{GLW}, we have
$$|\Hti\nabla\hti^i_j-\hti^i_j\nabla\Hti|_{g,\bti}^2\geq\frac{1}{\Hti^2}|\Hti\nabla\hti^i_j-\hti^i_j\nabla\Hti|^2$$
then, using \eqref{grHu},
\begin{equation}\label{gr1}
\frac{\qti}{\Hti^2}\phi'|\Hti\nabla\hti^i_j-\hti^i_j\nabla\Hti|_{g,\bti}^2\geq
\frac{1}{2}\varepsilon^2\frac{\qti}{\Hti^2}\phi'|\nabla\Hti|^2.
\end{equation}
Now we use estimate $(4.5)$ in \cite{GLW}: if $\varepsilon$ is close enough to $\frac{1}{n}$, then
$$\left|\bti^i_j-\frac{n}{\Hti}\delta^i_j\right|\leq\frac{\sqrt{n}(n-1)(1-n\varepsilon)}{(1-(n-1)\varepsilon)\Hti}.$$
Note that $\lim_{\varepsilon\to\frac{1}{n}}\frac{\sqrt{n}(n-1)(1-n\varepsilon)}{(1-(n-1)\varepsilon)\Hti}=0$.
Then, using also \eqref{gr1}, we have
\begin{align*}
\frac{\qti}{\Hti^2}&\phi'|\Hti\nabla\hti^i_j-\hti^i_j\nabla\Hti|_{g,\bti}^2
+\qti\phi''|\nabla\Hti|^2_{\bti-\frac{n}{\Hti}g}\\
&\geq\qti\left(\frac{\phi'}{\Hti^2}|\Hti\nabla\hti^i_j-\hti^i_j\nabla\Hti|_{g,\bti}^2
-\left|\bti^i_j-\frac{n}{\Hti}\delta^i_j\right|\phi''|\nabla\Hti|^2\right)\\
&\geq\qti\left(\frac{1}{2}\varepsilon^2\frac{\phi'}{\Hti^2}-
\phi''\frac{\sqrt{n}(n-1)(1-n\varepsilon)}{(1-(n-1)\varepsilon)\Hti}\right)|\nabla\Hti|^2.
\end{align*}
Then there exists a time $t_1$ such that for all $t\geq t_1$ holds
$$\frac{1}{2}\varepsilon^2\frac{\phi'}{\Hti^2}-\phi''\frac{\sqrt{n}(n-1)(1-n\varepsilon)}{(1-(n-1)\varepsilon)\Hti}\geq0.$$
Now we show that also the reaction terms are negative. By the relationship between the harmonic  and the arithmetic means of $n$ positive numbers, we have that $\Bti-\frac{n^2}{\Hti}\geq 0$. Therefore, the first term in the last line of \eqref{evf} can be omitted. Moreover, since we already know that $\mathcal{M}_t$ converges to a geodesic sphere, we can deduce some properties useful to estimate the remaining terms. By Proposition \ref{hc}, for every positive $\eta$, there exists a time $t_2>0$ such that for every $t\geq t_2$ the following holds:
$$
|\phi-h|\leq\eta.
$$
Then, if we choose $\eta$ small enough, we can find a positive constant $\delta_2$ such that
$$
\phi'H-\phi+h\geq \delta_2,
$$
when $t\geq t_2$.
Moreover there exists a time $t_3$ such that if $t\geq t_3$ $\mathcal{M}_t$ is strictly $\hh$-convex. It follows that there are two positive constants $\delta_3$ and $\delta_4$ such that for $t\geq t_3$ 
$$
\begin{array}{l}
\qti\Hti\geq\delta_3,\qquad\dfrac{n\Ati^2-\Hti^2}{\Hti^2}\geq\delta_4.
\end{array}
$$
The second inequality can be proved as in Lemma 2.5 of \cite{Sch2}.
Let $\bar{t}=\max\left\{t_1,t_2,t_3\right\}$ and $ \delta=\delta_2\delta_3\delta_4$, then for $t\geq\bar{t}$ we have
\begin{eqnarray*}
\frac{\partial f}{\partial t} &\leq& \phi'\Delta f-\frac{(n+1)}{n\Hti^n}\phi'\langle\nabla f,\nabla\Hti^n\rangle
+\frac{(n-1)}{n\kti}\phi'\langle\nabla f,\nabla\kti\rangle+\frac{\qti^{-1}}{n}\phi'|\nabla f|^2-\delta f.
\end{eqnarray*}
The thesis follow by the maximum principle.
\end{proof}

Arguing as in Theorem $3.5$ of \cite{Sch2} we obtain:
\begin{cor}
The second fundamental form $A$ converges exponentially in $C^{\infty}$ to the one of a geodesic sphere. In particular, there exist positive constants $c',\delta'$ such that
$$|\phi(H)-h|\leq c'e^{-\delta' t}$$
\end{cor}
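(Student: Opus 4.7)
The plan is to first extract $C^0$ exponential decay of the traceless second fundamental form $\mathring{A} := A - \frac{H}{n}g$ from the exponential decay of $f = \frac{1}{n^n} - \frac{\kti}{\Hti^n}$ established in the preceding Proposition, and then to upgrade this to $C^\infty$ exponential decay by interpolation against the uniform higher-order bounds from Section~4. The pointwise estimate on $|\phi(H)-h|$ then follows as an immediate corollary.

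For the first step, the key algebraic fact is the pinching inequality
$$f \geq \frac{c_0}{\Hti^2}\,|\mathring{A}|^2,$$
valid for $t$ large enough (where $\mt$ is strictly $\hh$-convex). Indeed, $f$ vanishes precisely when the perturbed principal curvatures $\tilde\lambda_i := \lambda_i - a$ are all equal, and a Taylor expansion at the diagonal, together with the fact that smooth convergence to a sphere confines the $\tilde\lambda_i$ in a uniform compact subinterval of $(0,\infty)$, produces the quadratic lower bound. Note that $|\mathring{\tilde{A}}| = |\mathring{A}|$, since subtracting $a\,\mathrm{Id}$ does not affect the traceless part. Combining with the uniform upper bound on $\Hti$ yields $|\mathring{A}|^2 \leq C\,e^{-\delta t}$.

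For the bootstrap, I would apply standard Gagliardo--Nirenberg type interpolation on the evolving hypersurfaces: since all covariant derivatives $\nabla^k A$ are uniformly bounded (Section~4) and the induced metrics $g(\cdot,t)$ converge smoothly to the round metric of a fixed geodesic sphere, for each $k \geq 0$ one obtains
$$\|\nabla^k \mathring{A}\|_\infty \leq C_k\,\|\mathring{A}\|_\infty^{\theta_k}$$
with $\theta_k \in (0,1)$ and constants $C_k$ independent of $t$. This propagates the exponential rate to every derivative, giving $C^\infty$ exponential convergence of $A$ to the second fundamental form of a geodesic sphere. For the supplementary bound, the exponential decay of $|\mathring{A}|$ combined with the volume (resp.\ area) preservation identifies the limit sphere and forces $|H-H_*| \leq C\,e^{-\delta' t}$, where $H_* = n\co(r_*)$ is the mean curvature of that limit sphere. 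The $C^1$-regularity of $\phi$ on the bounded range of $H$ then gives $|\phi(H)-\phi(H_*)| \leq (\sup\phi')\,|H-H_*| \leq C\,e^{-\delta' t}$, and since $h(t)$ is an average of $\phi(H)$ (or of $H\phi(H)/H$) over $\mt$, the same rate propagates to $|h-\phi(H_*)|$, whence $|\phi(H)-h| \leq c'\,e^{-\delta' t}$ by the triangle inequality.

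The main obstacle is preserving the exponential rate through the bootstrap: the interpolation constants must be genuinely time-independent, which requires not only the uniform $C^\infty$ bounds on $A$ but also the $C^\infty$ convergence of the induced metrics to a fixed round metric, so that the functional-analytic estimates are controlled by a single ambient geometry. A lesser subtlety is translating decay of $\mathring{A}$ into decay of $H-H_*$ without loss of rate, for which the preserved integral quantity is essential to pin down the limit sphere.
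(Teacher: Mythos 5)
Your approach is essentially the one the paper invokes by citing Theorem~3.5 of Schulze \cite{Sch2}: pinching inequality to convert decay of $f$ into $C^0$ decay of the traceless second fundamental form, then interpolation against the uniform higher-order bounds to upgrade to $C^\infty$, then Lipschitz continuity of $\phi$ on the compact range of $H$ to reach $|\phi(H)-h|$. The algebraic observation $\mathring{\tilde A}=\mathring A$ and the quadratic lower bound $f\geq c_0\Hti^{-2}|\mathring A|^2$ (valid once the $\tilde\lambda_i$ live in a fixed compact subinterval of $(0,\infty)$, which strict $\hh$-convexity for large $t$ guarantees) are correct and are exactly Schulze's Lemma~2.4-type estimate in the shifted variables. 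The concerns you flag about time-independence of the interpolation constants are real but are handled by the uniform $C^\infty$ bounds on $A$ from Section~4, as you note.

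The one place where your argument is looser than it needs to be is the passage to $|\phi(H)-h|$. You route through the limit value $H_*$ and the conservation law, but establishing $|H-H_*|\leq Ce^{-\delta' t}$ requires controlling the rate at which the \emph{average} of $H$ converges, which is not immediate from decay of $\mathring A$ alone and which you only gesture at. A cleaner and more direct route, which sidesteps $H_*$ entirely, is the following: once interpolation yields exponential decay of $\|\nabla\mathring A\|_\infty$, the Codazzi identity gives $|\nabla H|\leq C|\nabla\mathring A|$, hence $|\nabla H|$ decays exponentially; combined with the uniform diameter bound (Corollary~\ref{bounddist}) this gives $\max_{\mt}H-\min_{\mt}H\leq Ce^{-\delta_1 t}$, hence $\max_{\mt}\phi(H)-\min_{\mt}\phi(H)\leq(\sup\phi')\,Ce^{-\delta_1 t}$; since $h(t)$ is a weighted average of $\phi(H)$ over $\mt$ in both cases \eqref{vpr} and \eqref{apr}, it lies between $\min\phi(H)$ and $\max\phi(H)$, and the desired bound on $|\phi(H)-h|$ follows immediately. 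The identification of the specific limit sphere (and hence of $H_*$), which you need for the first assertion of the corollary, is then obtained afterwards exactly as in the paragraph following the corollary in the paper, by integrating the velocity bound to show $F(\cdot,t)$ is Cauchy in $C^0$.
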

From the previous corollary it follows that the limit hypersurface exists with no need to add isometries. In fact, for any $0\leq t_1<t_2$
\begin{align*}
\max_{\m}|F(x,t_1)-F(x,t_2)|&\leq \max_{\m}\int_{t_1}^{t_2}|\partial_t F(x,t)|dt\\
&=\max_{\m}\int_{t_1}^{t_2}|\phi-h|dt\leq\dfrac{c'}{\delta'}(e^{-\delta't_1}-e^{-\delta't_2})
\end{align*} 
then the whole family $F(\cdot,t)$ tends to a  limit hypersurface for $t$ that goes to infinity.
Finally, the smooth convergence of the second fundamental form implies the smooth convergence of the metric and of the embeddings, by standard arguments used for example in \cite{Sch2}.
 This complete the proof of Theorem \ref{mt}.


\bigskip
\noindent Maria Chiara Bertini, Dipartimento di Matematica e Fisica, Universit\`a di Roma ``Roma Tre'', Largo San Leonardo Murialdo 1, 00146, Roma, Italy. \\ E-mail: bertini@mat.uniroma3.it \\

\noindent Giuseppe Pipoli, Institut Fourier, Universit\'e Grenoble Alpes, 100 rue des maths, 38610, Gi\`eres, France.\\ E-mail: giuseppe.pipoli@univ-grenoble-alpes.fr\\
Giuseppe Pipoli was supported for this research by the ERC Avanced Grant 320939, Geometry and Topology of Open Manifolds (GETOM)


\begin{thebibliography}{20}


\bibitem{Al}A.D.Alexandrov, {\em Uniqueness theorems for surfaces in the large} V. Vestnik Leningrad Univ., \textbf{13}, No. 19, A.M.S. (Series 2), \textbf{21} (1958), 412 --416.

\bibitem{AlSi}R.Alessandroni, C.Sinestrari, {\em Convexity estimates for a nonhomogeneous mean curvature flow}, Math. Z. \textbf{266} (2010), 65--82.

\bibitem{An1}B.Andrews, {\em Contraction of convex hypersurfaces in Euclidean space}, Calc. Var. Partial Differ. Equ. \textbf{2} (1994), 151--171.

\bibitem{An1b}B.Andrews, {\em Volume-preserving anisotropic mean curvature flow}, Indiana Univ. Math. J. \textbf{50} (2001), 783--827.



\bibitem{BeSi} M.C.Bertini, C. Sinestrari {\em Volume preserving non homogeneous mean curvature flow of convex hypersurfaces}, arXiv:1610.07436 .

\bibitem{BoGaRe}A.Borisenko, E.Gallego, A.Revents , {\em Relation between area and volume for $\lambda$-convex sets in Hadamard manifolds}, Differential Geom. Appl. \textbf{14} (2001), n.3, 267--280.

\bibitem{BoMi}A.Borisenko, V.Miquel, {\em Total curvature of convex hypersurfaces in the hyperbolic space}, Illinois J. Math. \textbf{43} (1999), n. 1, 61--78.

\bibitem{BoMi2}A.Borisenko, V.Miquel, {\em Comparison theorems on convex hypersurfaces in Hadamard manifolds}, Annals of Global Analysis and Geometry \textbf{21} (2002), 191--202.

\bibitem{BoVl}A.Borisenko, D.I.Vlasenko, {\em Asymptotic behaviour of volumes of convex bodies in a Hadamard manifold} (Russian), Mat. Fiz. Anal. Geom. \textbf{6} (1999), n.3--4, 267--280. 

\bibitem{CaMi}E.Cabezas-Rivas, V.Miquel, {\em Volume preserving mean curvature flow in the hyperbolic space}, Indiana Univ. Math. J. \textbf{56} (2007), 2061--2086.

\bibitem{CaSi}E.Cabezas-Rivas, C.Sinestrari, {\em Volume-preserving flow by powers of the mth mean curvature}, Calc. Var. Partial Differ. Equ. \textbf{38} (2010), 441--469.


\bibitem{CT1} B. Chow, D.H. Tsai, {\em Expansion of convex hypersurface by non--homogeneous functions of
curvature}, Asian J. Math. \textbf{1} (1997), 769--784.


\bibitem{G} M. Gage, {\em On an area-preserving evolution equation for plane curves,} in ``Nonlinear problems in geometry'' (Mobile, Ala., 1985), 51--62, Contemp. Math., 51, Amer. Math. Soc., Providence, RI, (1986).

\bibitem{GLW} S.Guo, G.Li, C.Wu, {\em Volume preserving flow by power of the $m$th mean curvature in the hyperbolic space}, arXiv:1306.4539 .

\bibitem{Ha} R.S.Hamilton, {\em Three-manifolds with positive Ricci curvature}, J. Differ. Geom. \textbf{17} (1982), 255-306.

\bibitem{Hu1}G.Huisken, {\em Flow by mean curvature of convex surfaces into spheres}, J. Differ. Geom. \textbf{20} (1984), 237--266.

\bibitem{Hu2}G.Huisken, {\em The volume preserving mean curvature flow}, J. Reine Angew. Math. \textbf{382} (1987), 35--48.

\bibitem{HuPo}G.Huisken, A.Polden, {\em Geometric evolution equations for Hypersurfaces}, Calculus of Variations and Geometric Evolutions Problems (Cetraro, 1996), Lecture Notes in Mathematics, vol. 1713 (1999), pp 45--84 Springer, Berlin.


\bibitem{Ma} M.Makowsky{\em Mixed volume preserving curvature flows in hyperbolic space} arXiv:1208.1898.

\bibitem{Mc1}J.A.McCoy, {\em The mixed volume preserving mean curvature flow}, Math. Z. \textbf{246} (2004), 155--166.

\bibitem{Mc2}J.A.McCoy, {\em Mixed volume preserving curvature flows}, Calc. Var. Partial Differ. Equ. \textbf{24} (2005), 131--154.


\bibitem{Sh}F.Schulze, {\em Evolution of convex hypersurfaces by powers of the mean curvature}, Math. Z. \textbf{251} (2005), 721--733.

\bibitem{Sch2}F. Schulze,  {\em Convexity estimates for flows by powers of  the mean curvature} (with an appendix by O. Schn\"urer and F. Schulze), Ann. Sc. Norm. Super. Pisa Cl. Sci. (5) \textbf{5} (2006), 261--277.

\bibitem{Si}C.Sinestrari, {\em Convex hypersurfaces evolving by volume preserving curvature flows}, Calc. Var. Partial Differ. Equ. \textbf{54} (2015), 1985--1993.

\bibitem{Sm}K.Smoczyk, {\em Harnack inequalities for curvature flows depending on mean curvature}, New York J. Math. \textbf{3} (1997), 103--118.


\bibitem{WaXi}G.Wang, C.Xia {\em Isoperimetric type problems and Alexandrov–Fenchel type inequalities in the hyperbolic space} Advances in Mathematics\textbf{259} (2014), 532--556.

\bibitem{XLZ}H.Xu, Y.Leng, E.Zhao {\em Volume-preserving mean curvature flow of hypersurfaces in space forms} International Journal of Mathematics \textbf{25} 1450021 (2014).

\end{thebibliography}
\end{document}